\newdefinition{rmk}{Remark}
\newproof{pf}{Proof}
\journal{}
\begin{document}

\newcommand{\be}{\begin{equation}}
\newcommand{\ee}{\end{equation}}
\newcommand{\bt}{\beta}
\newcommand{\ep}{\epsilon}
\newcommand{\al}{\alpha}
\newcommand{\laa}{\lambda_\alpha}
\newcommand{\lab}{\lambda_\beta}
\newcommand{\no}{|\Omega|}
\newcommand{\nd}{|D|}
\newcommand{\Om}{\Omega}
\newcommand{\h}{H^1_0(\Omega)}
\newcommand{\lt}{L^2(\Omega)}
\newcommand{\la}{\lambda}
\newcommand{\ro}{\varrho}
\newcommand{\cd}{\chi_{D}}
\newcommand{\cdc}{\chi_{D^c}}
\theoremstyle{plain}
\newtheorem{thm}{Theorem}[section]
\newtheorem{cor}[thm]{Corollary}
\newtheorem{lem}[thm]{Lemma}
\newtheorem{prop}[thm]{Proposition}
\theoremstyle{definition}
\newtheorem{defn}{Definition}[section]
\newtheorem{exam}{Example}[section]
\theoremstyle{remark}
\newtheorem{rem}{Remark}[section]
\numberwithin{equation}{section}
\renewcommand{\theequation}{\thesection.\arabic{equation}}
\numberwithin{equation}{section}
\begin{frontmatter}



\title{ Extremal energies of Laplacian operator: Different configurations for steady vortices  }
\author[am]{Seyyed Abbas Mohammadi }
\ead{mohammadi@yu.ac.ir}

%
%
\address[am]{Department of Mathematics, College of Sciences,
Yasouj University, Yasouj, Iran, 75918-74934 }

%
%
%
\begin{abstract}
In this paper, we study a maximization and a minimization problem associated with a Poisson boundary
value problem. Optimal solutions in a set of rearrangements of a given function define stationary and stable flows
of an ideal fluid in two dimensions. The main contribution of this paper is  to determine the optimal solutions. At first,
we derive the solutions analytically when the problems are in  low contrast regime. Moreover,
it is established that the  solutions of both problems  are unique.       
Secondly, for  the high contrast regime, two optimization algorithms are developed. For the minimization problem, we prove that  our algorithm converges to the global minimizer regardless of the initializer. The maximization algorithm is capable of deriving all local maximizers including the global one. Numerical experiments  leads us to a
conjecture about  the location of the maximizers in the set of rearrangements of a function.
\end{abstract}
\begin{keyword}
 Laplacian Operator\sep Shape optimization \sep Analytic solution  \sep Rearrangement
\sep Steady vortices


\MSC  49Q10 \sep 35J25 \sep 76A02  \sep 65Z05 \sep 65K10  

\end{keyword}

\end{frontmatter}


\section{Introduction}\label{intro}
 
Optimal shape design for energy functionals corresponding to elliptic operators provides
a vast number of interesting and challenging mathematical problems; see \cite{henrot} and the references.
This class of problems arise naturally in many different fields, such as mechanical vibrations, electromagnetic cavities, photonic crystals and population dynamics.

 In this paper we are considering the problem of optimizing an energy functional corresponding
 to the Laplacian operator with Dirichlet's boundary conditions.


 Let $\Omega$ be a bounded smooth domain  in $\mathbb{R}^N$ and let $f_0=\al \chi_{D_0}+\beta \chi_{D_0^c} $ be a step function such that $D_0\subset \Om$ and  $|D_0|=A>0$ and $\al> \bt>0$. Notation $|.|$ stands for Lebesgue measure. Define $\mathcal{F}$  as
the family of all measurable functions which are rearrangement of $f_0$, we will see later in section \ref{prem} that $f\in \mathcal{F} $ if and only if $f=\al \chi_{D}+\beta \chi_{D^c} $ where $D\subset \Om, \: |D|=A $. For $f \in \mathcal{F}$, consider the following Poisson boundary value problem

\begin{equation} \label{mpde}
-\Delta u=f \quad
\mathrm{in}~\Omega,\qquad \quad u=0 \quad \mathrm{on}~\partial
\Omega.
\end{equation}
 This equation has many applications in different areas of engineering and science including electrostatics, steady fluid flow and Brownian motion to name a few \cite{strauss}. As an application,
  consider the planar motion under
  an irrotational body force of an incompressible, inviscid fluid contained in an infinite cylinder of uniform cross-section $\Om$. If $f$ represents the vorticity for the flow of the
  fluid, then $u_f$ is the stream function. In equation \eqref{mpde}, flows are sought in which the vorticity is a rearrangement of a prescribed function $f_0$.
  
   Defining the energy functional $$\Psi(f)=\int_{\Om} f u_fdx,$$
   we are interested in the following optimization problems
   \begin{equation}\label{maxp}
  \underset{f\in \mathcal{F}}{\max } \Psi(f),
   \end{equation}
    \begin{equation}\label{minp}
    \underset{f\in \mathcal{F}}{\min } \Psi(f).
    \end{equation}
    Indeed, one can recast optimization problems \eqref{maxp} and \eqref{minp} into the problem of finding a set $D\subset \Omega,  \: |D|=A$ where the corresponding  vorticity function $f=\al \chi_{D}+\beta \chi_{D^c}$ is the optimal solution of the functional 
    \begin{equation*}
    \Psi(D)=\int_{\Omega} \left(\al \chi_{D}+\beta \chi_{D^c}\right) u(D)dx,
        \end{equation*}
         where $u=u(D)$ is the corresponding solution of \eqref{mpde}. Such set $D$ is called an optimal set.
         
          Physically, the functional $\Psi(f)$ 
          represents the kinetic energy of the fluid. Steady flows correspond to stationary points of the kinetic
          energy relative to rearrangement of the vorticity. A vorticity $f$ that maximize or minimize the energy functional $\Psi(f)$
          in a set of rearrangements therefore defines a stationary and stable flows. These solutions determine different configurations of a
          region of vorticity in an otherwise irrotational flow \cite{bu89, bahrami}.

          There is a plenitude of papers studied rearrangement optimization problems \eqref{maxp} and \eqref{minp},
          see  \cite{bu89, bu87, bu91, master,Alvino} and  the references therein.
                   These studies have investigated existence, uniqueness and some qualitative properties of the solutions for  \eqref{maxp} and \eqref{minp} such as symmetry of the maximizers or minimizers . It has been proved that the minimization problem \eqref{minp}  has a unique solution and the optimal solution of the problem has been determined when $\Omega$ is a ball centered at the origin. Although the maximization problem \eqref{maxp} admits a solution for general domain $\Omega$, the uniqueness of the solution has been established only for the case that $\Omega$ is a ball. Indeed, solutions of \eqref{maxp} are not unique in general.
          
           In  optimization problems like \eqref{maxp} and \eqref{minp}, one of challenging mathematical problems after the problem of existence is an exact formula of the optimizer or optimal shape design. Most papers in this field answered this question just in case $\Om$ is a ball. For other domains qualitative properties of solutions were investigated and some partial answers were given \cite{chanillo, abbasali,abbasali2,derlet}. From the physical point of view, it is important  to know the shape of the optimal vorticity  in case $\Om$ is not a ball.
           This class of problems is difficult to solve because of the lack of the topology information of the optimal shape.
           
           The main contribution of this paper is to determine  the optimal shape design for optimization problems \eqref{maxp} and \eqref{minp} when  $\Omega$ is an arbitrary  domain .
           We will find the solution to problems \eqref{maxp} and \eqref{minp} analytically when $\al$ and $\bt$ are  close to each other which are called the low contrast regime problems.
           Although it has been proved that solutions of \eqref{maxp}
           are not unique in general, we  establish that the  solutions of both problems \eqref{maxp} and \eqref{minp} are unique when $\alpha$ and $\beta$ are close to each other.       
                       The analytical solutions will be found by expanding the energy functional $\Psi$ with respect to $(\alpha - \beta)$.  A similar problem in the low contrast regime has been investigated by  Conca  \emph{et al} in \cite{Conca3} in order to minimize the first eigenvalue of an elliptic operator with Dirichlet conditions in a ball.
           
            When $\al$ and $\bt$ are not close to each other, the high contrast regime, there must be numerical approaches to determine the optimal shape design.
           The mostly used  methods now are the homogenization method \cite{alliare} and the level set            method \cite{osher}. The level set method is well known for its capability  to handle topological  changes, such as breaking one component into several, merging several components into one and forming sharp corners. This approach has been applied to the study of extremum problems of eigenvalues of inhomogeneous
           structures including the identification of composite membranes with extremum eigenvalues \cite{oshersantosa}, design of composite  materials with a desired spectral gap or maximal spectral gap \cite{kaoyab},
           finding optical devices that have a high quality factor \cite{kaosantosa2} and principle eigenvalue optimization in population biology \cite{kaoluo}.
            Recently, Kao and Su \cite{kao2} proposed an efficient rearrangement algorithm based on the Rayleigh quotient formulation of eigenvalues. They have solved minimization and maximization problem for the $k$-th eigenvalue ($k\geq 1$) and maximization of spectrum ratios of the second order elliptic differential operator in $\mathbb{R}^2$.
            
             Motivated by Kao and Su's method, two optimization algorithms is developed in order to find the optimal energies for problems \eqref{maxp} and \eqref{minp}. For  minimization problem \eqref{minp}, we prove that our algorithm converges to the global minimizer of \eqref{minp} regardless of the initializer. It is worth noting that in \cite{kao2} the numerical simulations  have been repeated with some different initial guesses to find the optimal solutions. Furthermore, an acceptance-rejection method have been included for some algorithms in \cite{kao2}. This method which is called the partial swapping method increases the cost of computations. This is due to the fact that one must accept or reject some data by checking whether the objective function is improved or not. In our algorithm for minimization, we replace the partial swapping method with a step that one should only verify that some of data fulfill a criterion. 
             For the maximization problem \eqref{maxp}, we show that our algorithm converges to a local maximizer.
                      Running the algorithm with different initializers, one can obtain the global maximizer. It is worth noting here that local maximizers are also corresponding to steady flows of the fluid and determining them are physically important \cite{bu89}.  Employing our maximization algorithm, one can derive  local maximizers of  \eqref{maxp} for complicated domains such as   domain $\Omega$ that approximates the union of $n$ balls.    Numerical experiments  lead us to a conjecture   that the local maximizers are in the farthest away from $\hat{f}$, the global minimizer of  \eqref{minp}, relative to $\mathcal{F}$.
                      If one starts the maximization algorithm from a function farthest away from $\hat{f}$ then the maximization algorithm converges faster.


\section{Preminileries}\label{prem}

In this section we state some results from the rearrangement theory related to our optimization problems \eqref{maxp}- \eqref{minp}. The reader
can refer to  \cite{bu87, bu89, Alvino} which are  standard references for  the rearrangement theory.

 Throughout this paper we shall write increasing instead  of non- decreasing, and decreasing instead of non- increasing.
 
 \begin{defn}\label{readef}
 	Two Lebesgue measurable functions  $f: \Om \rightarrow \Bbb{R}$, $f_0:\Om \rightarrow \Bbb{R}$, are said to be rearrangements of each other if
 	\begin{equation}\label{rea}
 	|\{x\in \Om : f(x)\geq r \}|=|\{x\in \Om : f_0(x)\geq r\}|\qquad~\quad\forall r \in \mathbb{R}.
 	\end{equation}
 \end{defn}
 The notation $f\sim f_0$ means that $f$ and $f_0$ are rearrangements of each other. Consider $f_0:\Om \rightarrow \Bbb{R}$, the class of rearrangements generated by $f_{0}$, denoted $\mathcal{F}$, is defined as follows\\
 \begin{equation*}
 \mathcal{F}=\{f:f\sim f_{0}\}.
 \end{equation*}
 The closure of $\mathcal{F}$ in $\lt$ with respect to the weak topology is denoted by $\bar{\mathcal{F}}$.
 
  Set $f_0(x) = \al \chi_{D_0}+ \bt  \chi_{D_0^c}$ with  $|D|=A$  then we have the following
  technical assertion from \cite{mohammadi-bilap}.
 
 \begin{lem}\label{chirho}
 	Function $f$ belongs to the rearrangement class $\mathcal{F}$ if and only if  $f= \al \chi_{D}+ \bt  \chi_{D^c}$ where $D$ is a subset of $\Om$ with $|D|=A$.
 \end{lem}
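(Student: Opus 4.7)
The plan is to exploit Definition \ref{readef} by computing the distribution function of $f_0$ explicitly and matching it against an arbitrary candidate in $\mathcal{F}$. Because $f_0$ takes only the two values $\al$ and $\bt$ with $\al>\bt>0$, a direct inspection shows that $r\mapsto |\{x\in\Om:f_0(x)\geq r\}|$ is a three-level step function equal to $|\Om|$ on $(-\infty,\bt]$, equal to $A$ on $(\bt,\al]$, and equal to $0$ on $(\al,\infty)$.

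For the forward direction I would take $f=\al\chi_D+\bt\chi_{D^c}$ with $D\subset\Om$ and $|D|=A$, and perform the same three-case computation: the super-level set equals $\Om$, $D$, or $\emptyset$ according as $r\leq\bt$, $\bt<r\leq\al$, or $r>\al$. Since $|D|=A=|D_0|$, the two distribution functions coincide and \eqref{rea} gives $f\sim f_0$.

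For the converse I would start from $f\sim f_0$ and extract information from \eqref{rea} by plugging in carefully chosen thresholds. Taking $r=\bt$ yields $|\{f\geq\bt\}|=|\Om|$, so $f\geq\bt$ almost everywhere; taking any $r>\al$ yields $|\{f\geq r\}|=0$, so $f\leq\al$ almost everywhere; taking $r=\al$ yields $|\{f\geq\al\}|=A$. To control the intermediate band I would invoke continuity of Lebesgue measure from below along the increasing sequence $\{f\geq\bt+1/n\}$, obtaining $|\{f>\bt\}|=\lim_n|\{f\geq\bt+1/n\}|=A$. Combined with $f\leq\al$ a.e.\ and $|\{f\geq\al\}|=A$, this forces
\[
|\{x\in\Om:\bt<f(x)<\al\}|=|\{f>\bt\}|-|\{f\geq\al\}|=A-A=0,
\]
so $f$ attains only the two values $\al$ and $\bt$ almost everywhere. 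Setting $D:=\{x\in\Om:f(x)=\al\}$ then produces the representation $f=\al\chi_D+\bt\chi_{D^c}$ with $|D|=A$, as required.

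The only genuinely delicate step is the right-continuity argument pinning down $|\{f>\bt\}|=A$; this is what rules out an a priori layer on which $f$ could take intermediate values strictly between $\bt$ and $\al$. Once that is in place, both implications reduce to routine bookkeeping with super-level sets.
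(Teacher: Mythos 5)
Your proof is correct and complete. The paper itself does not prove this lemma --- it is quoted as a ``technical assertion'' imported from \cite{mohammadi-bilap} --- so there is no in-paper argument to compare against, but your computation of the three-level distribution function of $f_0$ and the matching of super-level sets at the thresholds $r\le\bt$, $\bt<r\le\al$, $r>\al$ is exactly the standard argument one would expect. The one genuinely necessary point, ruling out an intermediate layer $\{\bt<f<\al\}$ via continuity of the measure along $\{f\ge\bt+1/n\}$, is handled properly, and the passage from ``$|\{f\ge r\}|=0$ for every $r>\al$'' to ``$f\le\al$ a.e.'' via a countable union is routine as you say.
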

 
 Next two lemmas provide our main tool for deducing the analytical and  numerical results.
 
 \begin{lem}\label{bathmax}
 	Let $u(x)$ be a nonnegative function in $L^1(\Om)$ such that its level sets have measure zero. Then the maximization problem
 	\begin{equation}\label{bathtubsup}
 	\sup_{f \in \bar{\mathcal{F}}} \int_{\Om}fu dx,
 	\end{equation}
 	is uniquely solvable by $\widehat{f}(x)=\al \chi_{\hat{D}}+\bt  \chi_{\hat{D}^c}$ where $|\widehat{D}|=A$ and
 	\begin{equation*}
 	\widehat{D}=\{x\in \Om:\,\:u(x)\geq t\},
 	\end{equation*}
 	\begin{align*}
 	t=\sup\{s\in \mathbb{R} : |\{x\in \Om:\,\:u(x)\geq s\}|\geq A\}.
 	\end{align*}
  \end{lem}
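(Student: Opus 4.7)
The plan is to reduce the maximization to the classical bathtub principle by exploiting the two-valued structure of $f_0$ and the characterization of the weak closure $\bar{\mathcal{F}}$.

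First, I would use the fact that for the two-valued prototype $f_0=\alpha\chi_{D_0}+\beta\chi_{D_0^c}$, the weak-$L^2$ closure $\bar{\mathcal{F}}$ consists precisely of measurable $g$ with $\beta\le g\le\alpha$ a.e.\ and $\int_\Om g\,dx=\alpha A+\beta(|\Om|-A)$ (this follows from Lemma \ref{chirho} and the standard characterization of the closed convex hull of a rearrangement class). Writing $g=\beta+(\alpha-\beta)v$, the constraint becomes $0\le v\le 1$ and $\int_\Om v\,dx=A$, and
\[
\int_\Om g u\,dx=\beta\int_\Om u\,dx+(\alpha-\beta)\int_\Om v u\,dx,
\]
so problem \eqref{bathtubsup} is equivalent to maximizing $\int_\Om v u\,dx$ over this admissible class.

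Next, I would define $t$ and $\hat D$ as in the statement. Because the level sets of $u$ have measure zero, $|\{u\ge t\}|=|\{u>t\}|=A$, so $|\hat D|=A$ and $v=\chi_{\hat D}$ is admissible. To show it is optimal, for any competitor $v$ I would use the identity
\[
\int_\Om(v-\chi_{\hat D})u\,dx=\int_\Om(v-\chi_{\hat D})(u-t)\,dx,
\]
which holds because $\int_\Om v\,dx=\int_\Om\chi_{\hat D}\,dx=A$. On $\hat D$ one has $u-t\ge 0$ and $v-1\le 0$, while on $\hat D^c$ one has $u-t\le 0$ and $v\ge 0$; hence the integrand is pointwise $\le 0$, which yields $\int_\Om v u\,dx\le\int_\Om \chi_{\hat D}u\,dx$ and establishes that $\hat f=\alpha\chi_{\hat D}+\beta\chi_{\hat D^c}$ attains the supremum.

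For uniqueness, if some admissible $v$ also attains the supremum, the pointwise-nonpositive integrand must vanish a.e. Since the level sets of $u$ have measure zero, $u>t$ a.e.\ on $\hat D$ and $u<t$ a.e.\ on $\hat D^c$; consequently $v=1$ a.e.\ on $\hat D$ and $v=0$ a.e.\ on $\hat D^c$, so $v=\chi_{\hat D}$ a.e., i.e.\ $\hat f$ is the unique maximizer (and in particular it belongs to $\mathcal{F}$). The only delicate point I anticipate is justifying the characterization of $\bar{\mathcal{F}}$ used in the first step; if one wishes to avoid invoking it, one can equivalently prove the inequality directly for $f\in\mathcal{F}$ using the same slicing identity and pass to the closure by weak continuity of $g\mapsto\int_\Om g u\,dx$, together with the fact that the supremum over $\mathcal{F}$ equals that over $\bar{\mathcal{F}}$.
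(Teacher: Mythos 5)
Your argument is correct. Note that the paper does not prove Lemma \ref{bathmax} at all: it is quoted as a known result of rearrangement theory (the ``bathtub principle'' in the form due to Burton \cite{bu87,bu89} and Alvino--Trombetti--Lions \cite{Alvino}), and your proof is essentially the standard one from those references --- substitute $g=\beta+(\alpha-\beta)v$, use $\int_\Om(v-\chi_{\hat D})u\,dx=\int_\Om(v-\chi_{\hat D})(u-t)\,dx$, and observe the integrand is pointwise nonpositive, with the zero-measure-level-set hypothesis giving both $|\hat D|=A$ (via continuity of $s\mapsto|\{u\ge s\}|$) and uniqueness. One small simplification on the point you flag as delicate: you do not need the full characterization of $\bar{\mathcal{F}}$, only the easy inclusion $\bar{\mathcal{F}}\subseteq\{g:\ \beta\le g\le\alpha\ \text{a.e.},\ \int_\Om g\,dx=\alpha A+\beta(|\Om|-A)\}$, which is immediate because the right-hand set is convex and strongly closed in $\lt$, hence weakly closed, and contains $\mathcal{F}$; your pointwise inequality then bounds the supremum over this larger set, while $\hat f\in\mathcal{F}$ attains it, and the uniqueness argument runs verbatim on the larger set. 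This sidesteps the harder reverse inclusion entirely.
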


\begin{lem}\label{bathmin}
	Let $u(x)$ be a nonnegative function in $L^1(\Om)$ such that its level sets have measure zero. Then the minimization problem
	\begin{equation}\label{bathtubinf}
	\inf_{f \in \bar{\mathcal{F}}} \int_{\Om}fu dx,
	\end{equation}
	is uniquely solvable by $\widehat{f}(x)=\al \chi_{\hat{D}}+\bt  \chi_{\hat{D}^c}$ where $|\widehat{D}|=A$ and
	\begin{equation*}
		\widehat{D}=\{x\in \Om:\,\:u(x)\leq t\},
	\end{equation*}
	\begin{align*}
		t=\inf\{s\in \mathbb{R} : |\{x\in \Om:\,\:u(x)\leq s\}|\geq A\}.
	\end{align*}

\end{lem}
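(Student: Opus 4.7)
The plan is to adapt the classical bathtub argument used for Lemma \ref{bathmax} to the minimization side. The key structural fact is that $\bar{\mathcal F}$ is an affine slice of $\lt$: every $f\in\bar{\mathcal F}$ satisfies $\bt\le f\le \al$ almost everywhere and carries the fixed mass $\int_\Om f\,dx=\al A+\bt(\no-A)$. Both properties pass from $\mathcal F$ to $\bar{\mathcal F}$ by weak $L^2$-(semi)continuity, since the slab $\{g\in\lt:\bt\le g\le \al\}$ is convex and strongly closed, and $g\mapsto\int_\Om g\,dx$ is weakly continuous. I would record these facts at the outset.

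Next I would verify that the threshold $t$, together with the hypothesis that all level sets of $u$ have measure zero, yields $|\widehat D|=A$. Under that hypothesis the distribution function $s\mapsto|\{u\le s\}|$ is continuous and nondecreasing, so $|\{u\le t\}|=A$ directly from the definition of $t$. In particular $\widehat f \in \mathcal F\subset \bar{\mathcal F}$ by Lemma \ref{chirho}, and on $\widehat D^c$ we have $u\ge t$ a.e.

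With these preliminaries in place, for any $f\in\bar{\mathcal F}$ I would split
\[
\int_\Om (f-\widehat f)\,u\,dx=\int_{\widehat D}(f-\al)u\,dx+\int_{\widehat D^c}(f-\bt)u\,dx.
\]
On $\widehat D$ one has $u\le t$ and $f-\al\le 0$, hence $(f-\al)u\ge (f-\al)t$; on $\widehat D^c$ one has $u\ge t$ a.e.\ and $f-\bt\ge 0$, hence $(f-\bt)u\ge (f-\bt)t$. Adding the two inequalities and using that $\int_\Om f\,dx=\int_\Om \widehat f\,dx$, I obtain
\[
\int_\Om (f-\widehat f)\,u\,dx\ge t\int_\Om(f-\widehat f)\,dx=0,
\]
which is exactly the sought minimization inequality.

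For uniqueness, equality in both pointwise estimates forces $(f-\al)(u-t)=0$ a.e.\ on $\widehat D$ and $(f-\bt)(u-t)=0$ a.e.\ on $\widehat D^c$. Since $\{u=t\}$ has measure zero, this gives $f=\al$ a.e.\ on $\widehat D$ and $f=\bt$ a.e.\ on $\widehat D^c$, i.e.\ $f=\widehat f$. The only genuine obstacle is the bookkeeping about $\bar{\mathcal F}$: the preserved pointwise bounds and mass under weak closure must be invoked for the otherwise-formal comparison to make sense on all of $\bar{\mathcal F}$, not just on $\mathcal F$. An alternative and equally short route is to apply Lemma \ref{bathmax} to $g:=\al+\bt-f$, which sends $\bar{\mathcal F}$ affinely onto the weak closure of the rearrangement class with the roles of $\al$ and $\bt$ exchanged, and converts minimization of $\int fu$ into maximization of $\int gu$; but the direct pointwise approach above seems cleaner.
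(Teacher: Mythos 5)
Your proof is correct. The paper itself gives no proof of this lemma --- it is quoted as a standard fact from the rearrangement literature (Burton; Alvino--Trombetti--Lions) --- and your argument is essentially the standard bathtub-principle proof found in those references: the pointwise inequalities $(f-\al)(u-t)\ge 0$ on $\widehat D$ and $(f-\bt)(u-t)\ge 0$ on $\widehat D^c$, summed and combined with the mass constraint, give optimality, and the null level set $\{u=t\}$ gives uniqueness. You also correctly handle the only delicate point, namely that the comparison must hold on all of $\bar{\mathcal F}$: the bounds $\bt\le f\le\al$ a.e.\ and the fixed integral do pass to the weak closure (convex strongly closed sets are weakly closed, and $f\mapsto\int_\Om f\,dx$ is weakly continuous), and that is all the lower-bound direction needs; the full characterization of $\bar{\mathcal F}$ is not required. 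The reduction to Lemma \ref{bathmax} via $f\mapsto\al+\bt-f$ that you mention is an equally valid shortcut.
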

We gain some insight into the solution of problem \eqref{mpde} from the following lemma.
\begin{lem}\label{uf}
	Let $f \in L^P(\Omega), p>1,$ be a nonnegative function and let $u(x)$ be a solution to problem \eqref{mpde} with  the right hand side $f$. Then 
	\\
	i) $u\in C(\bar{\Om})\cap W^{2,p}(\Om) \cap  W^{1,p}_0(\Om)$  and $\|u\|_{W^{2,p}(\Om)}\leq C \|f\|_ {L^P(\Omega)}$ where
	$C$ is independent of $f$ and $u$,\\
	ii) $0\leq u\leq\underset{\Omega}{\sup} f(x)$,\\
	iii) If $p\geq N $ then we have 
	$$\underset{\Omega}{\sup}\: u(x)\leq \frac{d}{N \omega_N^{\frac{1}{N}}}\|f\|_{ L^N(\Omega)},$$
	where d is the diameter of $\Omega$ and $\omega_N$ denotes the volume of the unit sphere in $\mathbb{R}^N$.\\
iv) If $f$ is a positive function, then $0<u$ in $\Omega$ and level sets of $u$ have measure zero.
\end{lem}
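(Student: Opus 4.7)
The plan is to handle the four parts in order, using standard elliptic regularity for (i), maximum principles for (ii) and the positivity half of (iv), a Green's-function / H\"older estimate for (iii), and a Sobolev chain-rule argument for the measure-zero part of (iv), which I expect to be the main technical obstacle.

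For (i), I would invoke the Calder\'on--Zygmund $L^p$ estimate for the Dirichlet Poisson problem on the smooth domain $\Omega$: for $1<p<\infty$ the weak solution lies in $W^{2,p}(\Omega)\cap W^{1,p}_0(\Omega)$ with $\|u\|_{W^{2,p}(\Omega)}\le C(p,\Omega)\|f\|_{L^p(\Omega)}$, and continuity up to the boundary follows from the Sobolev embedding in the interior combined with a barrier / exterior-sphere argument at the smooth $\partial\Omega$ where $u$ vanishes. For (ii) the lower bound $u\ge 0$ is the weak maximum principle applied to $-\Delta u = f\ge 0$ with $u|_{\partial\Omega}=0$, and the upper bound $u\le \sup_\Omega f$ is obtained by a comparison-principle argument against the constant majorant $M=\sup_\Omega f$. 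For (iii) I would use the Green's-function representation $u(x)=\int_\Omega G(x,y)f(y)\,dy$, dominate $G$ by the fundamental solution $c_N|x-y|^{2-N}$ of the Laplacian, apply H\"older's inequality with dual exponents $N$ and $N/(N-1)$, and evaluate the resulting Riesz integral over a ball of diameter $d$ containing $\Omega$ to produce the claimed constant $d/(N\omega_N^{1/N})$.

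For (iv), strict positivity follows from the strong maximum principle: $u\ge 0$ by (ii) and $u$ cannot vanish identically since $-\Delta u = f > 0$, so the strong maximum principle excludes any interior zero. The measure-zero property of the level sets is the main obstacle, and I would argue by contradiction: if $E_c:=\{u=c\}$ had positive Lebesgue measure for some $c\in\mathbb{R}$, the standard fact that Sobolev functions have vanishing gradient a.e.\ on their level sets (applied to $u\in W^{2,p}\subset W^{1,p}$) would give $\nabla u = 0$ a.e.\ on $E_c$; applying the same fact in turn to each $\partial_i u\in W^{1,p}(\Omega)$ (available thanks to (i)) would then force $D^2u = 0$ a.e.\ on $E_c$, so $\Delta u = 0$ a.e.\ on $E_c$, which contradicts $-\Delta u = f > 0$ and completes the proof.
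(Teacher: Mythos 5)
Parts (i) and (iv) of your proposal are sound and essentially reproduce what the paper's citations amount to: (i) is the Calder\'on--Zygmund $W^{2,p}$ theory (Theorem 9.15 and Lemma 9.17 of \cite{gilbarg}; note only that $W^{2,p}(\Om)\hookrightarrow C(\bar\Om)$ requires $p>N/2$, so the blanket claim of continuity for every $p>1$ needs either that restriction or the boundedness of $f$ available in the paper's application), and your two-fold use of ``$\nabla v=0$ a.e.\ on $\{v=c\}$'', first for $u$ and then for each $\partial_i u$, is exactly Lemma 7.7 of \cite{gilbarg}, which is what the paper invokes for (iv). The genuine gap is in (ii). The constant $M=\sup_\Om f$ is \emph{not} a supersolution of $-\Delta v= f$: since $-\Delta M=0\le f$, the function $w=M-u$ is subharmonic, so the maximum principle controls $\sup_\Om w$ by its boundary values and merely re-derives $u\ge 0$; it gives no upper bound on $u$. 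Indeed the inequality $u\le\sup_\Om f$ is false for the pure Poisson problem (on $B_R$ with $f\equiv 1$ one has $u=(R^2-|x|^2)/(2N)$ and $\sup u=R^2/(2N)$, which is not dimensionally controlled by $\sup f$); it is the maximum principle for $-\Delta u+u=f$, which is precisely Theorem 9.27 of \cite{brezis} that the paper cites. So your comparison step fails and cannot be repaired by a different constant barrier; any correct upper bound for $-\Delta u=f$ must carry a factor depending on $\Om$, as in part (iii).

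Part (iii) also does not come out as you describe. Dominating the Green's function by the Newtonian potential $\bigl(N(N-2)\omega_N\bigr)^{-1}|x-y|^{2-N}$, applying H\"older with exponents $N$ and $N/(N-1)$, and integrating the resulting Riesz kernel over a ball of radius $d$ yields
$$\underset{\Omega}{\sup}\:u\;\leq\;\frac{(N-1)^{\frac{N-1}{N}}}{N(N-2)\,\omega_N^{\frac{1}{N}}}\,d\,\|f\|_{L^N(\Omega)},$$
whose constant strictly exceeds $d/(N\omega_N^{1/N})$ for $N=3,4,5$ (and the case $N=2$ requires a separate treatment of the logarithmic kernel). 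The constant in the lemma is the Aleksandrov--Bakelman--Pucci constant, and the paper obtains it by citing Theorem 9.1 and Lemma 9.3 of \cite{gilbarg}; to prove the estimate with the stated constant you should invoke the ABP maximum principle rather than the Riesz-potential bound.
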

\begin{proof}
	The  proof of i) follows from Theorem 9.15 and Lemma 9.17 of \cite{gilbarg}. We have part ii) applying
	Theorem 9.27 of \cite{brezis}.
 The proof of part iii) is deduced from Theorem 9.1 and Lemma 9.3 of \cite{gilbarg}.
 The proof of the last part follows from Theorem 8.20 and Lemma 7.7 in \cite{gilbarg}.
 
\end{proof}

\section{Low Contrast Regime}\label{lowsec}

 This section is concerned with the optimization problems \eqref{maxp} and \eqref{minp} in a low contrast regime. This means that $\alpha$ and $\beta$ are close to each other: $\alpha= \beta+ \epsilon$ with $\epsilon>0$ small. Then a function $f$ in $\mathcal{F}$ has the form $f=\beta+ \epsilon \chi_D$, in this case. We will determine the solutions to  \eqref{maxp} and \eqref{minp} analytically when $\epsilon>0$ is small enough. In order to show the dependence on $\epsilon$ and $D$, we use $u=u^{\epsilon}(D)$ for the solution of \eqref{mpde} and 
 \begin{equation}\label{psiep}
 \Psi=\Psi(\epsilon,D)=\int_{\Om} (\bt+\epsilon \chi_D) u^\epsilon(D)dx.
 \end{equation}
 
   Let $\phi_0$ and $\phi_1(D)$ be the solutions of \eqref{mpde}  with the right hand sides $\bt$ and $\chi_D$ respectively where $D$ is a subset of $\Omega$ with $|D|=A.$  Recall that $\phi_0$ is a positive function and its
   level sets have measured zero in view of Lemma \ref{uf} -(iv).
   According to Lemma \ref{bathmax}, there is a set $D_M\subset \Omega$ which is the  solution  of the following optimization problem
  \begin{equation}\label{dmmaker}
  \underset{D \subset \Om,\: |D|=A}{\sup}\int_{\Om} \chi_D \phi_0dx.
  \end{equation}
  Note that $D_M $ is uniquely determined by
  \begin{equation}\label{tmform}
  D_M=\{x\in \Om:\,\:\phi_0(x)\geq t_M\},\:\: t_M=\sup\{s\in \mathbb{R} : |\{x\in \Om:\,\:\phi_0(x)\geq s\}|\geq A\}.
      \end{equation}
      
     We can now state the main result of this section.
   \begin{thm}\label{lowmaxthem}
   There exists $\ep_0>0$ such that for all $D\subset \Om,\: |D|=A,$ we have
   $$\Psi(\ep,D_M)\geq \Psi (\ep,D),\quad for \:\: all \:\: 0<\ep\leq\ep_0,$$
   and the equality occurs only  when $D=D_M$ almost everywhere in $\Om$.
   \end{thm}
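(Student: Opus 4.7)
The plan is to exploit the linearity of the Poisson equation in the right-hand side $f=\beta+\epsilon\chi_D$. Writing $u^{\epsilon}(D)=\phi_0+\epsilon\,\phi_1(D)$, substituting into $\Psi$, and using the duality $\int_\Omega \beta\,\phi_1(D)\,dx = \int_\Omega (-\Delta\phi_0)\,\phi_1(D)\,dx = \int_\Omega \phi_0\,\chi_D\,dx$ (integration by parts, since both $\phi_0$ and $\phi_1(D)$ vanish on $\partial\Omega$), the functional expands as
\[
\Psi(\epsilon,D) \;=\; C_0 \;+\; 2\epsilon\,G(D) \;+\; \epsilon^{2}\,H(D),
\]
where $C_0=\beta\int_\Omega\phi_0\,dx$ is independent of $D$, $G(D)=\int_\Omega \phi_0\,\chi_D\,dx$, and $H(D)=\int_\Omega \chi_D\,\phi_1(D)\,dx$.

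For the coefficient of $\epsilon$, the positivity of $\phi_0$ together with the measure-zero level set property (Lemma \ref{uf}(iv)) lets Lemma \ref{bathmax} apply directly to $\phi_0$, identifying $D_M$ as the unique maximizer of $G$ over $\{D\subset\Omega:|D|=A\}$. For the coefficient of $\epsilon^2$, Lemma \ref{uf}(ii) yields $0\le\phi_1(D)\le 1$ pointwise, hence $0\le H(D)\le A$ uniformly in $D$. Thus
\[
\Psi(\epsilon,D_M)-\Psi(\epsilon,D) \;=\; 2\epsilon\bigl[G(D_M)-G(D)\bigr] \;-\; \epsilon^{2}\bigl[H(D)-H(D_M)\bigr],
\]
where the first bracket is non-negative and vanishes if and only if $D=D_M$ a.e., while the second bracket is bounded in absolute value by $A$.

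The main obstacle is passing from this expansion to a uniform threshold $\epsilon_0$. For any $D$ keeping a definite $L^2$-distance from $D_M$, the strict uniqueness in Lemma \ref{bathmax} delivers a gap $G(D_M)-G(D)\ge c>0$, so any $\epsilon\le 2c/A$ handles that regime; the delicate case is $D$ very close to $D_M$ in $L^2$, where both brackets degenerate simultaneously. To treat it I would argue by contradiction and weak compactness: a hypothetical counterexample sequence $\epsilon_n\downarrow 0$, $D_n\neq D_M$ with $\Psi(\epsilon_n,D_n)>\Psi(\epsilon_n,D_M)$ forces $G(D_n)\to G(D_M)$; combining the bathtub uniqueness on the weak-* closure $\bar{\mathcal{F}}$ with the $L^2$-norm identity $\|\chi_{D_n}\|_{L^2}^{2}=A=\|\chi_{D_M}\|_{L^2}^{2}$ upgrades the resulting weak convergence to strong $L^2$-convergence $\chi_{D_n}\to\chi_{D_M}$. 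One then needs a quantitative second-order analysis around $D_M$: writing $v_n:=\phi_1(D_n)-\phi_1(D_M)$ with $-\Delta v_n=\chi_{D_n}-\chi_{D_M}$, splitting the right-hand side into its positive and negative parts and applying Lemma \ref{uf}(iii) to each, gives an $L^\infty$ bound on $v_n$ that decays with $|D_n\triangle D_M|$, so that the comparison of decay rates between $G(D_M)-G(D_n)$ and $H(D_n)-H(D_M)$ can be made explicit and a contradiction extracted. This quantitative second-order comparison is, in my view, the technical heart of the argument.
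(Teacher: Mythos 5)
Your expansion $\Psi(\epsilon,D)=C_0+2\epsilon G(D)+\epsilon^2H(D)$, the duality identity behind it, the identification of $D_M$ as the unique maximizer of $G$ via Lemma \ref{bathmax}, and the uniform bound $0\le H(D)\le A$ are all correct and coincide with Step 1 of the paper's proof (the paper writes the same three-term expansion in \eqref{psiepform}). Your treatment of the ``far'' regime by compactness, bathtub uniqueness on $\bar{\mathcal F}$, and the norm identity upgrading weak to strong convergence is also sound. The problem is that the entire difficulty of the theorem sits in the ``near'' regime, and there your argument is only a plan whose stated route does not close. In the contradiction setup you reach
\[
2\bigl[G(D_M)-G(D_n)\bigr]<\epsilon_n\bigl[H(D_n)-H(D_M)\bigr],
\]
and you propose to refute this by bounding $v_n=\phi_1(D_n)-\phi_1(D_M)$ in $L^\infty$ by a quantity decaying with $m_n:=|D_n\triangle D_M|$. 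Any such bound gives at best $|H(D_n)-H(D_M)|\le C\,m_n$ (indeed $H(D_n)-H(D_M)=\int_\Omega(\chi_{D_n}-\chi_{D_M})(\phi_1(D_n)+\phi_1(D_M))\,dx$ with $0\le\phi_1\le1$). But the first-order gap admits the exact representation $G(D_M)-G(D_n)=\int_{D_M\setminus D_n}(\phi_0-t_M)\,dx+\int_{D_n\setminus D_M}(t_M-\phi_0)\,dx$, and if $D_n\triangle D_M$ is contained in the shell $\{|\phi_0-t_M|<\epsilon_n\}$ (which has positive measure since $\phi_0$ is continuous with null level sets), then $G(D_M)-G(D_n)\le\epsilon_n m_n$. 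Both sides of your inequality are then $O(\epsilon_n m_n)$, so no comparison of decay rates expressed in terms of $m_n$ alone can produce the contradiction; the degenerate competitors are precisely those hugging the level surface $\{\phi_0=t_M\}$, and your scheme does not see them.

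The paper's proof supplies a structural ingredient you are missing: it never compares $D_M$ against an arbitrary nearby $D$, but only against the \emph{actual} maximizer $D^*_\epsilon$, and in its Step 2 it proves by a swapping argument (using $\phi_0\le u^\epsilon(D)\le\phi_0+\epsilon$, so that $u^\epsilon\ge t_M$ on $D_M$ while $u^\epsilon\le t_M$ off $D_\epsilon=\{\phi_0\ge t_M-\epsilon\}$) that $D^*_\epsilon\subset D_\epsilon$; this localizes $D_M\triangle D^*_\epsilon$ and yields $\chi_{D^*_\epsilon}\to\chi_{D_M}$ in $L^2$ before any rate comparison is attempted. The subsequent comparison in the paper is carried out in $H^1_0$ norms ($G(D_M)-G(D^*_\epsilon)=\langle\eta_\epsilon,\nabla\phi_0\rangle$ versus a second-order term controlled by $\epsilon\|\eta_\epsilon\|_{L^2}$) rather than via $L^\infty$ bounds; that step in the paper is itself terse, but the reduction to maximizers constrained to $D_\epsilon$ is the idea your proposal lacks, and without it (or a genuinely quantitative analysis of $H$ on shell perturbations) the ``technical heart'' you defer cannot be completed as described.
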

   \begin{proof}
   For the sake of clarity we divide the proof into several steps.  Let $\ep>0$ be a constant which is small compared to $1$.
   
   Step $1$. In view  of the linearity of the Laplace operator, we have 
   $u^\ep(D)=\phi_0+\ep \phi_1(D)$
   and so 
   \begin{equation}\label{psiepform}
    \Psi(\ep, D)=\int_{\Om} (\bt+\ep \chi_D)(\phi_0+\ep \phi_1(D))dx=\int_\Om \bt \phi_0dx+2\ep\int_\Om \chi_D \phi_0 dx+ \ep^2 \int_\Om \chi_D \phi_1(D) dx,
   \end{equation}
      using
   $$\bt\int_\Om  \phi_1(D) dx=\int_{\Om} \nabla \phi_0 \nabla \phi_1dx=\int_\Om \chi_D \phi_0 dx. $$
   Since $-\Delta (u^\ep(D)-\phi_0)=\ep \chi_D$ in $\Om$, we observe that for every $x$ in $\Om$,
   \begin{equation}\label{uepbound}
   \phi_0\leq u^\ep(D)\leq \phi_0 +\ep,
   \end{equation}
   applying  Lemma \ref{uf} -(ii). Defining  $D_\ep=\{x\in \Om:\,\:\phi_0(x)\geq t_M-\ep\}$, we see that 
   \begin{equation}\label{uepondep}
   0\leq u^\ep(D)\leq t_M,\:\:\: on\:\ D_\ep^c\:\:\: and \:\: u^\ep(D)\geq t_M\:\:\: on\:\: D_M,
   \end{equation}
   invoking  \eqref{tmform}, \eqref{uepbound} and Lemma \ref{uf} -(i).
   
   Step $2$. Fixing $\ep>0,$ assume  that $D^*_\ep$ is the maximizer of the optimization problem \eqref{maxp}. We claim that $D^*_\ep$ does not have intersection with
   $D^c_\ep$. Assume $|D^*_\ep\cap D^c_\ep|>0$ an then we have
    \begin{equation}\label{depstar}
   \int_{D^*_\ep} u^\ep(D^*_\ep)dx=\int_{D^*_\ep \cap D_\ep} u^\ep(D^*_\ep)dx+\int_{D^*_\ep\cap D^c_\ep}u^\ep(D^*_\ep)dx<\int_{D^*_\ep \cap D_\ep} u^\ep(D^*_\ep)dx+\int_{B}u^\ep(D^*_\ep)dx,
    \end{equation}
     where $B$ is an arbitrary subset of $D_M\setminus D^*_\ep$ with $|B|=|D^*_\ep \cap D_\ep^c|$.
     The strict inequality holds because of \eqref{uepondep} and the fact that level sets of $u^\ep(D^*)$ have zero measure in view of Lemma \ref{uf} -(iv).
Setting $\tilde{D_\ep}=(D_\ep^* \setminus D_\ep^c)\cup B$, we infer that
$$\int_{\Om} (\bt+\epsilon \chi_{D^*_\ep})  u^\ep(D^*_\ep)dx< \int_{\Om} (\bt+\epsilon \chi_{\tilde{D_\ep}}) u^\ep(D^*_\ep)dx.$$
It is straightforward to deduce
 $\Psi(\ep, D^*_\ep)< \Psi(\ep, \tilde{D_\ep}) $, see the proof of Theorem \ref{incrsecthm}.
This contradicts the optimality of $D^*_\ep$ and thereby proves that $D^*_\ep\subset D_\ep$.

Next, we show that $D_M$ is an approximation for the optimal set $D^*_\ep$ as $\ep$ is small.
We know according to the definition of $D_M$ and $D_\ep$ that $D^*_\ep \setminus D_M\subset D_\ep \setminus D_M $.
If $\ep>0$ tends zero, then $|D^*_\ep- D_M|=|D_M- D^*_\ep|$ tends zero. Since $\|\chi_{D_M}-\chi_{D^*_\ep}\|^2_{L^2(\Om)}=2|D_M-D^*_\ep|$, we deduce
\begin{equation}\label{tendchi}
 \chi_{D^*_\ep}\rightarrow \chi_{D_M}, \quad  as\:\:\:  \ep\rightarrow 0,
    \end{equation}
    in $L^2(\Om)$.
     
      Step $3$. In light of \eqref{tendchi}, we show that there exists $\ep_0>0$ small enough such that for all $0<\ep<\ep_0$ we have $D_M=D^*_\ep$. To do so,
      employing \eqref{psiepform} it is observed that 
      \be \label{diffpsi}
      \Psi(\ep,D_M) -\Psi(\ep,D^*_\ep)=\ep\left(2\int_{\Om} (\chi_{D_M}-\chi_{D^*_\ep})\phi_0 dx+ \ep \int_{\Om} \chi_{D_M} \phi_1(D_M)- \chi_{D^*_\ep} \phi_1(D^*_\ep) dx\right).
      \ee
      Recall that $D_M$ is the maximizer of \eqref{dmmaker} and so
      \be \label{ineqpos}
      \int_{\Om}( \chi_{D_M}-\chi_{D^*_\ep}) \phi_0 dx\geq 0.
      \ee
      We claim that  the second summand in the right hand side of \eqref{diffpsi} converges to zero with higher rate of convergence in comparison with the first summand. We therefore conclude  from \eqref{ineqpos} that there exists $\ep_0$ where
      $$ \Psi(\ep, D_M)\geq \Psi (\ep, D), \:\:\: D\subset \Om,\:\: |D|=A,$$
      for all $0<\ep< \ep_0$. This means that $D_M $ is the maximizer of \eqref{maxp} when $0<\ep< \ep_0$. 
      
       For simplicity we write $\eta_\epsilon= \nabla\phi_1(D_M)-\nabla \phi_1(D^*_\ep)$. Then, we have
       $$\int_{\Om} ( \chi_{D_M}-\chi_{D^*_\ep}) \phi_0 dx=\int_{\Om} \nabla\phi_1(D_M).\nabla\phi_0dx-\int_{\Om} \nabla\phi_1(D^*_\ep).\nabla\phi_0dx=<\eta_\ep, \nabla\phi_0>_{L^2(\Om)},$$
       On the other hand, 
       \begin{align*}
    \ep\left|\int_{\Om} \chi_{D_M}\phi_1(D_M)-\chi_{D^*_\ep}\phi_1(D^*_\ep)dx\right|=\ep\left|\|\phi_1(D_M)\|^2_{H^1_0(\Om)}-\|\phi_1(D^*_\ep)\|^2_{H^1_0(\Om)}\right|=
    \\
    \ep \left(\|\phi_1(D_M)\|_{H^1_0(\Om)}+\|\phi_1(D^*_\ep)\|_{H^1_0(\Om)}\right)\left| \|\phi_1(D_M)\|_{H^1_0(\Om)}-\|\phi_1(D^*_\ep)\|_{H^1_0(\Om)}\right|  \leq
    \\
     \qquad \qquad \qquad\qquad \qquad \qquad C\ep \|\phi_1(D_M)-\phi_1(D^*_\ep)\|_{H^1_0(\Om)}=C\ep<\eta_\ep,\eta_\ep >^{1/2}_{L^2(\Om)},
       \end{align*}
      where we have the inequality because of Lemma \ref{uf} -(i). The generic constant $C$ is independent of $\ep$.
      Recall that  $\chi_{D^*_\ep}\rightarrow \chi_{D_M}$ in $L^2(\Om)$ as $\ep$ tends to zero and so $\eta_\ep \rightarrow 0$ in view of Lemma \ref{uf} -(i).
      
       In summary, we have discovered that the rate of convergence of the first summand  in the right hand side of \eqref{diffpsi} equals to the rate of convergence of the function $<\eta_\ep, \nabla\phi_0>_{L^2(\Om)}$. Moreover,
       the rate of convergence of the second summand  is greater and equal to $\ep<\eta_\ep,\eta_\ep >^{1/2}_{L^2(\Om)}$. Hence, the claim is easily deduced.
       
        Step $4$. In the last step, we will address the  uniqueness of the optimizer. If $D_M \neq D^*_\ep$ for $0<\ep<\ep_0$, then $\int_{\Om} ( \chi_{D_M}-\chi_{D^*_\ep}) \phi_0 dx>0$ since $D_M$ is the  unique solution of  \eqref{dmmaker}.
        This means that the right hand side of \eqref{diffpsi} is positive which yields the uniqueness.

   \end{proof}
It is worth noting here that  solutions of maximization problem \eqref{maxp} are not unique in general \cite{master}. Indeed, Theorem \ref{lowmaxthem} asserts that $f=\alpha\chi_{D_M}+\beta \chi_{D_M^c}$ is the unique maximizer.

 Now, we examine the minimization problem \eqref{minp} in a low contrast regime.  According to Lemma \ref{bathmin}, there is a set $D_m$ which is the  solution  of the following optimization problem
\begin{equation}\label{dm2maker}
\underset{D\in \Om,\: |D|=A}{\inf}\int_{\Om} \chi_D \phi_0dx.
\end{equation}
Note that $D_m $ is uniquely determined by
\begin{equation}\label{tm2form}
D_m=\{x\in \Om:\,\:\phi_0(x)\leq t_m\},\:\: 	t_m=\inf\{s\in \mathbb{R} : |\{x\in \Om:\,\:u(x)\leq s\}|\geq A\}.
\end{equation}
Then, we have the following result for the minimization problem in the low contrast regime which asserts that 
$f=\alpha\chi_{D_m}+\beta \chi_{D_m^c}$ is the unique minimizer of \eqref{minp}.
 \begin{thm}\label{lowminthem}
 	There exists $\ep_0>0$ such that for all $D\subset \Om,\: |D|=A,$ we have
 	$$\Psi(\ep,D_m)\leq \Psi (\ep,D),\quad for \:\: all \:\: 0<\ep\leq\ep_0,$$
 	and the equality occurs only  when $D=D_m$ almost everywhere in $\Om$.
 \end{thm}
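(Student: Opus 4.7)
The plan is to mirror the proof of Theorem \ref{lowmaxthem} step by step, reversing the direction of optimization. The expansion
$$\Psi(\ep, D) = \int_\Om \bt \phi_0 dx + 2\ep \int_\Om \chi_D \phi_0 dx + \ep^2 \int_\Om \chi_D \phi_1(D) dx$$
and the two-sided bound $\phi_0 \leq u^\ep(D) \leq \phi_0 + \ep$ are independent of whether we maximize or minimize and can be reused verbatim. The key adjustment is to introduce the sublevel set $D_\ep := \{x \in \Om : \phi_0(x) \leq t_m + \ep\}$ in place of $\{\phi_0 \geq t_M - \ep\}$; this set contains $D_m$, satisfies $u^\ep(D) \leq t_m + \ep$ on $D_m$, and $u^\ep(D) > t_m + \ep$ on $D_\ep^c$ (using the upper bound $u^\ep(D) \leq \phi_0 + \ep$ and the lower bound $u^\ep(D) \geq \phi_0$ together with Lemma \ref{uf}(i)).

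Next I would carry out the analog of the swap argument for a minimizer $D^*_\ep$ of \eqref{minp}. If $|D^*_\ep \cap D_\ep^c| > 0$, pick $B \subset D_m \setminus D^*_\ep$ with $|B| = |D^*_\ep \cap D_\ep^c|$ and set $\tilde{D}_\ep = (D^*_\ep \setminus D_\ep^c) \cup B$. Since $u^\ep(D^*_\ep) > t_m + \ep$ on $D^*_\ep \cap D_\ep^c$ while $u^\ep(D^*_\ep) \leq t_m + \ep$ on $B$, and since level sets of $u^\ep(D^*_\ep)$ are null by Lemma \ref{uf}(iv), the strict inequality $\int_{\tilde{D}_\ep} u^\ep(D^*_\ep) dx < \int_{D^*_\ep} u^\ep(D^*_\ep) dx$ holds, and Theorem \ref{incrsecthm} then yields $\Psi(\ep, \tilde{D}_\ep) < \Psi(\ep, D^*_\ep)$, contradicting optimality. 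Hence $D^*_\ep \subset D_\ep$, and because $|D^*_\ep| = |D_m| = A$ while $|D_\ep \setminus D_m| \to 0$ as $\ep \to 0^+$, the convergence $\chi_{D^*_\ep} \to \chi_{D_m}$ in $L^2(\Om)$ follows exactly as in \eqref{tendchi}.

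Finally, I would write
$$\Psi(\ep, D^*_\ep) - \Psi(\ep, D_m) = 2\ep \int_\Om (\chi_{D^*_\ep} - \chi_{D_m}) \phi_0 dx + \ep^2 \int_\Om \bigl[ \chi_{D^*_\ep} \phi_1(D^*_\ep) - \chi_{D_m} \phi_1(D_m) \bigr] dx.$$
The first summand is nonnegative because $D_m$ solves \eqref{dm2maker}, and the second is of strictly higher order in $\ep$ by the $H^1_0$-continuity of $\phi_1(\cdot)$ from Lemma \ref{uf}(i), exactly as justified in Step 3 of the maximization proof (the bound $C\ep \|\phi_1(D_m)-\phi_1(D^*_\ep)\|_{H^1_0}$ compared against the dominant $\ep$-order of the first summand). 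So $\Psi(\ep, D^*_\ep) \geq \Psi(\ep, D_m)$ once $\ep$ is small enough, and strict positivity of the first summand whenever $D^*_\ep \neq D_m$ almost everywhere, granted by the uniqueness clause in Lemma \ref{bathmin}, delivers the equality case. The only real subtlety I anticipate is the bookkeeping of the $\ep$-shift direction in defining $D_\ep$ so that the swap inequality flips in the required way; once this is set up correctly, every estimate transfers verbatim and I foresee no new essential obstacle.
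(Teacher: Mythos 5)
Your setup of the sublevel set $D_\ep=\{\phi_0\leq t_m+\ep\}$ and the resulting bounds $u^\ep(D)\leq t_m+\ep$ on $D_m$, $u^\ep(D)>t_m+\ep$ on $D_\ep^c$ are correct, and the final comparison (nonnegativity of $\int_\Om(\chi_{D^*_\ep}-\chi_{D_m})\phi_0\,dx$ plus the higher-order bound on the $\ep^2$-term) transfers as you say. The genuine gap is in the swap step: from $\int_{\tilde D_\ep}u^\ep(D^*_\ep)\,dx<\int_{D^*_\ep}u^\ep(D^*_\ep)\,dx$ you conclude $\Psi(\ep,\tilde D_\ep)<\Psi(\ep,D^*_\ep)$ ``by Theorem \ref{incrsecthm}.'' That theorem cannot deliver this. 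Its proof rests on the sup-characterization \eqref{varu}, which only yields \emph{lower} bounds on the energy of the new configuration: from $\int f_1 u_{f_0}\,dx<\int f_0 u_{f_0}\,dx$ one gets $\Psi(f_0)\geq\int(2f_1u_{f_0}-|\nabla u_{f_0}|^2)\,dx$ and, separately, $\Psi(f_1)\geq\int(2f_1u_{f_0}-|\nabla u_{f_0}|^2)\,dx$ --- two inequalities pointing the same way, which do not compare $\Psi(f_0)$ with $\Psi(f_1)$. The paper makes exactly this point at the start of Section 4.2, and it is the whole reason Theorem \ref{minseq} and condition \eqref{mincon} exist: decreasing the linear functional $\int fu_{f_0}\,dx$ does \emph{not} automatically decrease $\Psi$, because the exact identity \eqref{psidiff} carries the positive remainder $\int\delta f\,u_{\delta f}\,dx=\ep^2\|\phi_1(\tilde D_\ep)-\phi_1(D^*_\ep)\|^2_{H^1_0(\Om)}$, which works against you in the minimization direction.

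To repair the argument you must either (a) quantify the swap: show that the strictly negative first-order gain $2\ep\int(\chi_{\tilde D_\ep}-\chi_{D^*_\ep})u^\ep(D^*_\ep)\,dx$ dominates that $\ep^2$-remainder, which is not automatic since $u^\ep(D^*_\ep)$ may exceed $t_m+\ep$ on $D^*_\ep\cap D_\ep^c$ by an arbitrarily small margin; or (b) bypass the swap entirely. Route (b) is cleaner here: since $D^*_\ep$ is a minimizer, $\Psi(\ep,D^*_\ep)\leq\Psi(\ep,D_m)$, and the expansion \eqref{psiepform} together with the uniform bound $|\int_\Om\chi_D\phi_1(D)\,dx|\leq C$ gives $0\leq\int_\Om(\chi_{D^*_\ep}-\chi_{D_m})\phi_0\,dx\leq C\ep$. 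Writing this quantity as $\int_{D^*_\ep\setminus D_m}(\phi_0-t_m)\,dx-\int_{D_m\setminus D^*_\ep}(\phi_0-t_m)\,dx$, both terms are nonnegative by the definition of $D_m$ and $t_m$, so each tends to $0$; since $\phi_0>t_m$ a.e.\ on $D_m^c$ and the level set $\{\phi_0=t_m\}$ is null, this forces $|D^*_\ep\setminus D_m|\to0$ and hence $\chi_{D^*_\ep}\to\chi_{D_m}$ in $L^2(\Om)$. From there your Step 3 and the uniqueness argument go through unchanged.
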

\begin{proof}
 The proof is similar to that for Theorem \ref{lowmaxthem} and is omitted.
\end{proof}

It is worth noting here that one can derive optimal sets $D_M$ and $D_m$  easily applying  algorithms like Algorithm 1 in the next section.
\section{High Contrast Regime}\label{high}
 In this section we investigate problems \eqref{maxp} and  \eqref{minp}
 in the hight contrast regime which means that $\al$ and $\bt$ are not close to each other.
 Numerical approaches is developed to determine the solutions of problems  \eqref{maxp} and \eqref{minp}. The algorithms 
 are strongly based on Lemmas \ref{bathmax} and \ref{bathmin} which dealing with level sets of the
 solution $u(x)$ of \eqref{mpde}. We will see that in each iteration steps of our algorithm we need to derive set $\hat{D}$ in 
 Lemma \ref{bathmax} or \ref{bathmin}.
 
   In order to find $\hat{D}$, two algorithms can be developed which they apply the idea of the bisection method. Both algorithms  are the same in essence and we only state the algorithm related to the minimization problem  \eqref{minp}. Introducing the distribution function $F(s)=|\{x\in \Om:\:\: u(x)\leq s \}|$, we state Algorithm $1$ to compute $t$ in Lemma \ref{bathmin} and determine $\hat{D}$ consequently. 
   We should consider a tolerance $TOL$ in the algorithm since it is meaningless computationally to find a set
  $\hat{D}$ satisfying $|D|=A$ exactly.
  \begin{table}[h]\label{bisal}
  	\centering 
  	\begin{tabular}{ l} 
  		\hline 
  		\hline
  		\textbf{Algorithm $1$.} Bisection method for $t$ \\ [1 ex] 
  		\hline 
  		\hline
  		\textbf{Data:} A nonnegative function $u(x)$ where its level sets  have measure zero.\\ 
  		\textbf{Result:} The level $t$.  \\
  		\textbf{$1$.} Set $L= 0, \quad U=\underset{x\in\Om}{\max}\,\, u(x)$;  \\
  		\textbf{$2$.}  Set  $\theta=(L+U)/2$; \\
  		\textbf{$3$.} If $|F(\theta)-A|< TOL$ then set $t=\theta$ \\
  		$\quad$else \\
  		\,\,\,\,\,$\quad$ If $F(\theta)<A$ then \\
  		\,\,\,\,\,$\quad$ $\quad$ Set $L=\theta$; Go to step 2;\\
  		\,\,\,\,\,$\quad$ else\\
  		\,\,\,\,\,$\quad$ $\quad$ Set $U=\theta$; Go to step 2;\\
  		[1ex] 
  		\hline 
  	\end{tabular}
  	\label{table1} 
  \end{table}

  \subsection{Maximization Problem \eqref{maxp}}
   
   Here we describe our algorithm to determine a solution of \eqref{maxp} numerically. We start from a given  vorticity function $f_0$ in $\mathcal{F}$ and extract new  vorticity function $f_1$ such that 
  $\Psi(f_0) \leq \Psi(f_1) $. In this way, we derive a sequence  of functions $\{f_n\}_1^\infty$ such that the corresponding 
  energy functionals $\{\Psi(f_n)\}_1^\infty$ is an increasing sequence of energies.
  \begin{thm}\label{incrsecthm}
  	Assume $f_0=\al\chi_{D_0}+\bt\chi_{D_0^c}$ is a member of $\mathcal{F}$ and $u_{f_0}$ is the solution to
  	\eqref{mpde} with the right hand side $f_0$. Setting $u=u_{f_0}$ in Lemma \ref{bathmax}, suppose that
  	$f_1$ is the maximizer in that lemma. Then $\Psi(f_0) \leq \Psi(f_1) $ and the equality occurs only when
  	$f_0=f_1$ almost everywhere in $\Omega$.
  	
  \end{thm}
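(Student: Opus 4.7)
The plan is to show that the update $f_0 \mapsto f_1$ produced by Lemma \ref{bathmax} raises the energy by combining that lemma with an integration-by-parts identity and the Cauchy--Schwarz inequality. Write $u_0 = u_{f_0}$ and $u_1 = u_{f_1}$ for brevity; testing \eqref{mpde} against the solution itself gives $\Psi(f_i) = \int_\Om f_i u_i \, dx = \|u_i\|_\h^2$ for $i = 0, 1$, a fact I will use repeatedly.

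First I would verify the hypotheses of Lemma \ref{bathmax} for $u = u_0$: since $f_0 \geq \bt > 0$, Lemma \ref{uf}(iv) gives $u_0 > 0$ in $\Om$ and the level sets of $u_0$ have measure zero. By construction $f_1$ is then the unique maximizer of $\int_\Om f u_0 \, dx$ over $\bar{\mathcal{F}}$, so comparing with $f_0 \in \mathcal{F} \subset \bar{\mathcal{F}}$ yields the linear inequality
\[
\int_\Om f_0 u_0 \, dx \leq \int_\Om f_1 u_0 \, dx,
\]
with strict inequality unless $f_0 = f_1$ a.e.

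The next step is to trade the mixed integral on the right for the geometric mean of the two energies. Testing the weak form of $-\Delta u_1 = f_1$ against $u_0$ gives $\int_\Om f_1 u_0 \, dx = \int_\Om \nabla u_1 \cdot \nabla u_0 \, dx$, and Cauchy--Schwarz bounds this by $\|u_0\|_\h \|u_1\|_\h = \sqrt{\Psi(f_0)\Psi(f_1)}$. Chaining with the previous inequality yields $\Psi(f_0) \leq \sqrt{\Psi(f_0)\Psi(f_1)}$; the case $\Psi(f_0) = 0$ is trivial, and otherwise dividing by $\sqrt{\Psi(f_0)}$ and squaring gives $\Psi(f_0) \leq \Psi(f_1)$.

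Finally, for the equality statement, if $\Psi(f_0) = \Psi(f_1)$ then the bathtub inequality of the first step must be an equality, and the uniqueness clause of Lemma \ref{bathmax} immediately forces $f_0 = f_1$ a.e. in $\Om$. There is no serious obstacle here; the only small subtlety is recognising the correct way to control $\int_\Om f_1 u_0 \, dx$ by $\Psi(f_1)$, which is not an immediate rearrangement argument but requires the symmetry of the Green's operator followed by Cauchy--Schwarz to produce the geometric mean that can then be absorbed.
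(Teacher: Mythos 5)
Your proof is correct, and it takes a genuinely different (though closely related) route from the paper. The paper's argument runs through the dual variational characterization $\Psi(f)=\sup_{u\in H^1_0(\Omega)}\int_\Omega(2fu-|\nabla u|^2)\,dx$: it bounds $\Psi(f_1)$ from below by testing with the suboptimal candidate $u_{f_0}$ and then invokes the same linear comparison $\int_\Omega f_0u_{f_0}\,dx\le\int_\Omega f_1u_{f_0}\,dx$ from Lemma \ref{bathmax} that you use. You instead exploit the self-adjointness of the Green operator to write $\int_\Omega f_1u_{f_0}\,dx=\int_\Omega\nabla u_{f_1}\cdot\nabla u_{f_0}\,dx$ and apply Cauchy--Schwarz, obtaining $\Psi(f_0)\le\sqrt{\Psi(f_0)\Psi(f_1)}$. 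At bottom both arguments are the same piece of Hilbert-space geometry --- the paper's inequality $2\langle u_{f_1},u_{f_0}\rangle-\|u_{f_0}\|^2\le\|u_{f_1}\|^2$ is the completed square $\|u_{f_1}-u_{f_0}\|^2\ge0$, while yours is $\langle u_{f_1},u_{f_0}\rangle\le\|u_{f_1}\|\,\|u_{f_0}\|$ --- but they are organized differently. Your version is marginally more self-contained since it never needs the sup-characterization \eqref{varu}; the paper's version has the advantage that the identity \eqref{varu} is reused verbatim in the analysis of the minimization problem, where the analogous one-line argument breaks down. Your handling of the equality case (collapse of the whole chain forces equality in the bathtub step, then uniqueness in Lemma \ref{bathmax} gives $f_0=f_1$ a.e.) is sound and matches the paper's; the side remark that $\Psi(f_0)>0$, needed before dividing by $\sqrt{\Psi(f_0)}$, is justified since $f_0\ge\beta>0$ and $u_{f_0}>0$ by Lemma \ref{uf}.
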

  \begin{proof}
  	In what follows, it is convenient to use the following formula for $\Psi(f):$
  	\begin{equation}\label{varu}
  	 \Psi(f)=\int_{\Omega} \left(2fu_f-|\nabla u_f|^2\right) dx=\underset{u\in H^1_0(\Om)}{\sup} \int_{\Omega} \left(2fu-|\nabla u|^2\right) dx.
  	 \end{equation}
  	 In light of Lemma \ref{uf} -(iv), we know that the level sets of $u_{f_0}$ have measure zero. Note that 
  	 \begin{equation}\label{f0leqf1}
  	 \int_{\Omega} f_0 u_{f_0} dx \leq \int_{\Omega} f_1 u_{f_0} dx.
  	 \end{equation}
  	 Invoking \eqref{varu} and \eqref{f0leqf1}, we see that
  	 $$ \Psi(f_1)=\int_{\Omega} \left(2f_1u_{f_1}-|\nabla u_{f_1}|^2\right) dx\geq \int_{\Omega} \left(2f_1u_{f_0}-|\nabla u_{f_0}|^2\right) dx\geq \Psi(f_0). $$
  	 We have  equality in the last expression  if and only if  equality holds in \eqref{f0leqf1}. By the uniqueness assertion in Lemma \ref{bathmax}, we observe that  equality holds in \eqref{f0leqf1} if and only if $f_0=f_1$.
  \end{proof}
  
 Utilizing Theorem \ref{incrsecthm}, we can derive an increasing sequence of energies $\Psi(f_{n-1})\leq \Psi(f_{n})$ with
 starting from a given  $f_0$ in $\mathcal{F}$.
 \begin{thm}\label{maxseqcon}
 	Let $\{f_n\}_1^\infty$ be an increasing sequence derived by Theorem \ref{incrsecthm}. Then, this sequence converges to a local 
 	maximizer of \eqref{maxp}.
 \end{thm}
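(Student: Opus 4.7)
The plan is to combine the monotonicity from Theorem \ref{incrsecthm} with weak compactness of $\bar{\mathcal{F}}$ to extract a subsequential limit $f^*$, then to identify $f^*$ as a fixed point of the rearrangement map $T:f\mapsto f_{\mathrm{new}}$ defined through Lemma \ref{bathmax}. Once the fixed-point property $T(f^*)=f^*$ is established, Lemma \ref{bathmax} forces $f^*=\al\chi_{D^*}+\bt\chi_{(D^*)^c}$ with $D^*=\{u_{f^*}\geq t^*\}$, i.e.\ $f^*$ is comonotone with its own stream function; this is the standard stationarity condition from which local maximality in $\lt$ will follow.

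First I would establish the convergence and continuity ingredients. Each $f\in\mathcal{F}$ is bounded above by $\al$, so Lemma \ref{uf}-(ii) yields a uniform upper bound on $\Psi(f_n)$; combined with $\Psi(f_n)\le\Psi(f_{n+1})$ from Theorem \ref{incrsecthm}, the energies converge to some $\Psi^*$. Next, since $\mathcal{F}$ is bounded in $\lt$ and its weak closure $\bar{\mathcal{F}}$ is weakly compact, I extract a subsequence with $f_{n_k}\rightharpoonup f^*\in\bar{\mathcal{F}}$, noting $f^*\geq\bt>0$ a.e.\ in $\Om$. The solution map $f\mapsto u_f$ factors through $H^2(\Om)\cap\h$ by Lemma \ref{uf}-(i) and is therefore compact from $\lt$ to $\h$, so $u_{f_{n_k}}\to u_{f^*}$ strongly in $\h$ and in $\lt$. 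Writing
\[
\Psi(f_{n_k})=\int_\Om f_{n_k}u_{f^*}\,dx+\int_\Om f_{n_k}(u_{f_{n_k}}-u_{f^*})\,dx,
\]
weak--strong pairing handles the first term and Cauchy--Schwarz the second, giving $\Psi(f^*)=\Psi^*$.

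The hard part will be showing $T(f^*)=f^*$. I would pass to a further subsequence so that $f_{n_k+1}\rightharpoonup g^*\in\bar{\mathcal{F}}$, and by the same continuity argument $\Psi(g^*)=\Psi^*$. The defining extremality of $f_{n_k+1}$ from Lemma \ref{bathmax} reads $\int_\Om f_{n_k+1}u_{f_{n_k}}\,dx\ge\int_\Om g\,u_{f_{n_k}}\,dx$ for every $g\in\mathcal{F}$; passing to the weak limit on the left with $g$ fixed on the right gives $\int_\Om g^*u_{f^*}\,dx\ge\int_\Om g\,u_{f^*}\,dx$ for all $g\in\bar{\mathcal{F}}$. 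The matching reverse inequality comes from the Dirichlet-principle identity \eqref{varu}: applying it to $\Psi(g^*)$ with test function $u_{f^*}$ yields $\Psi(g^*)\ge 2\int_\Om g^*u_{f^*}\,dx-\Psi(f^*)$, and substituting $\Psi(g^*)=\Psi(f^*)=\Psi^*$ forces $\int_\Om g^*u_{f^*}\,dx\le\int_\Om f^*u_{f^*}\,dx$. Since $f^*\geq\bt>0$, Lemma \ref{uf}-(iv) ensures $u_{f^*}>0$ with level sets of measure zero, so the uniqueness clause of Lemma \ref{bathmax} applied to $g\mapsto\int_\Om g\,u_{f^*}\,dx$ identifies both $f^*$ and $g^*$ as the same unique maximizer over $\bar{\mathcal{F}}$. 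Hence $f^*=g^*\in\mathcal{F}$ and $T(f^*)=f^*$.

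Finally, the stationarity $f^*=\al\chi_{\{u_{f^*}\ge t^*\}}+\bt\chi_{\{u_{f^*}<t^*\}}$ is the classical Euler--Lagrange condition for maximizing $\Psi$ over $\mathcal{F}$ (cf.\ \cite{bu87, bu89}) and characterizes local maximizers in the $\lt$ topology, so $f^*$ is a local maximizer of \eqref{maxp}. The main obstacle, as indicated above, is the simultaneous passage to the limit in $f_{n_k}$ and in the next iterate $f_{n_k+1}$, and the need to glue together the rearrangement extremality with the Dirichlet-principle characterization of $\Psi$ to pin down $g^*=f^*$; the remainder is standard compactness and continuity of the Poisson solution map.
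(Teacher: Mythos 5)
Your proposal is correct and follows essentially the same route as the paper: monotone bounded energies, weak $L^2$-compactness of $\mathcal{F}$, compactness of the solution map to upgrade to strong convergence of the stream functions, identification of the limit as the bang-bang maximizer of $g\mapsto\int_\Om g\,u_{f^*}\,dx$ via Lemma \ref{bathmax}, and the comonotonicity condition $f^*=\phi(u_{f^*})$ with $\phi$ increasing to conclude local maximality from Burton's theory. The only real difference is one of rigor: where the paper asserts in a single sentence that the limit may be taken to lie in $\mathcal{F}$ ``because of Lemma \ref{bathmax} and Theorem \ref{incrsecthm}'', your two-subsequence argument --- pairing the extremality of $f_{n_k+1}$ against the Dirichlet-principle identity \eqref{varu} to force $\int_\Om g^*u_{f^*}\,dx=\int_\Om f^*u_{f^*}\,dx$ and then invoking the uniqueness clause of Lemma \ref{bathmax} --- actually supplies the fixed-point justification that the paper leaves implicit.
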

 \begin{proof}
 	First note that $\|f\|_{\lt}=\|f_1\|_{\lt}$ for all $f$ in $\mathcal{F}$ \cite{bu87,bu89}.
 	Consider the corresponding sequence of energies $\{\Psi(f_n)\}_1^\infty$.
 	The sequence is bounded above since 
 	$$\Psi(f_n)= \int_{\Omega} f_n u_{f_n}dx\leq \|f_n\|_{\lt}\|u_{f_n}\|_{\lt}\leq C \|f_0\|_{\lt} \|u_{f_n}\|_{H^2(\Omega)}\leq C \|f_0\|_{\lt}^2,$$
 	in view of Holder's inequality, Poincar\'{e}'s inequality, and Lemma \ref{uf}. 
 	The sequence $\{f_n\}_1^\infty$ is bounded in $\lt$ then there is a subsequence (still denoted by $\{f_n\}_1^\infty$)
 	converging $\hat{f}$ in $\lt$ with respect to the weak topology. Moreover, $\{u_{f_n}\}_1^\infty$ is a bounded sequence
 	in $H^1_0(\Omega)$ and there is a subsequence (still denoted by $\{u_{f_n}\}_1^\infty$) converging weakly to $\hat{u}$ in
 	$H^1_0(\Omega)$. The compact embedding of $H^1_0(\Omega)$ into $\lt$ (see \cite{brezis}) yields that $\{u_{f_n}\}_1^\infty$ converges strongly to $\hat{u} = u_{ \hat{f} }$ in $\lt$. This leads us to the fact that $\{\Psi(f_n)\}_1^\infty$
 	converges to $\xi=\Psi(\hat{f})$. In view of definition \ref{readef}, it is straightforward to verify that
 	$\bt \leq \hat{f}(x)\leq \al$ in $\Om$. Hence, level sets of $u_{ \hat{f} }$ have measure zero invoking Lemma \ref{uf} -(iv). One can find a member of $\mathcal{F}$ (still denoted by $\hat{f})$ such that $\xi= \Psi(\hat{f})$ because of
 	Lemma \ref{bathmax} and Theorem \ref{incrsecthm}. 
 	
 	So far we have proved that the maximization sequence converges to $\hat{f}$, a member of $\mathcal{F}$. We establish that $\hat{f}$ 	is a local maximizer. Indeed, $\hat{f}=\al\chi_{\hat{D}}+\bt \chi_{\hat{D}}$ where $\{x\in\Omega:\:\:\hat{u}\geq \hat{t} \}$ for $\hat{t}\in \mathbb{R}$. Then, we can introduce increasing function
 	$\phi:\mathbb{R}\rightarrow \mathbb{R} $
 	$$
 	 \phi(t)=
 	 \left\{
 	 \begin{array}{ll}
 	 	\beta\quad\;\;\;\quad t\leq \hat{t} ,
 	 	\\ \al \quad \quad  \;\; \;     t> \hat{t} ,
 	  	 \end{array}
 	 \right.$$
 	 where yields that $\phi(u_{\hat{f}})=\hat{f}$. Therefor, we deduce that $\Psi(\hat{f})\geq \Psi(f)$ for all
 	 $f\in \mathcal{N}$, where $\mathcal{N}$ is a strong neighborhood of $\hat{f}$ relative to $\mathcal{F}$ \cite{ bu89,bu87,bu91,master}. 
 	
 \end{proof}
 
  Now, we provide the details of the maximization algorithm introduced above. At iteration step $n$, there is a guess for the configuration of the optimal vorticity function where it is denoted by $f_n$. We use the finite element method with piecewise linear basis functions to discretize  equation \eqref{mpde} with $f_n$ as its right hand side. 
  
   Let $u_{f_n}$ be a
   solution of \eqref{mpde}  associated with
    $f_n$. For  maximization problem \eqref{maxp},
   we should extract a new  function $f_{n+1}$  based upon the level sets of $u_{f_n}$ where it belongs to $\mathcal{F}$
   and $\Psi(f_n) < \Psi(f_{n+1})$. To derive this $f_{n+1}$,
   we make use of  Lemma  \ref{bathmax} and identify $f_{n+1}$ by setting $u(x)=u_{f_n}$ in that lemma.
   According to Theorems  \ref{incrsecthm} and \ref{maxseqcon}, we have $\Psi(f_n) < \Psi(f_{n+1})$ and the
   generated sequence is convergent. The resulting algorithm is shown in Algorithm 2. There is a stopping criterion in this method.
   The algorithm  stops
   when $\delta \Psi=|\Psi(f_{n+1})-\Psi(f_{n})|$ is less than a prescribed tolerance $TOL$.
   \begin{table}[h]
   	\centering 
   	\begin{tabular}{ l} 
   		\hline 
   		\hline
   		\textbf{Algorithm $2$.} Energy maximization \\ [1 ex] 
   		\hline 
   		\hline
   		\textbf{Data:} An initial  vorticity function $f_0$  \\ 
   		\textbf{Result:} A sequence of increasing energies $\Psi(f_n)$  \\
   		\textbf{$1$.} Set $n = 0$;  \\
   		\textbf{$2$.} Compute $u_{n}$ and $\Psi(f_n)$; \\
   		\textbf{$3$.} Compute $f_{n+1}$ applying Lemma \ref{bathmax}; \\
   		\textbf{$4$.} Compute $\Psi(f_{n+1})$;\\
   	\textbf{$5$.} If $ \delta \Psi< TOL$ then stop;\\
   	$\quad$ else\\
   	$\qquad$ $\qquad$  Set $n=n+1$;\\
   	$\qquad$ $\qquad$ Go to step $2$;\\
   		[1ex] 
   		\hline 
   	\end{tabular}
   	\label{table1} 
   \end{table}
   
    In the third step of Algorithm 2, we should employ Algorithm 1 associated with Lemma \ref{bathmax} to derive set $D_{n+1}$ and then $f_{n+1}=\alpha \chi_{D_{n+1}}+ \bt \chi_{D_{n+1}^c}$.
   \subsection{Minimization Problem \eqref{minp}}
   
    Here we explain our algorithm to derive the global solution of minimization problem \eqref{minp}.    
    In the same spirit as in the maximization case, we initiate from   a given density functions $f_0$ and extract another  vorticity  function $f_1  $ such that the corresponding energy functional decreased , i.e. $\Psi(f_0)\geq \Psi(f_1) $. The problem is that the minimization problem is more complicated and  an iterative method cannot be derived by arguments similar to those in the maximization case. If we consider $f_0$ as an arbitrary  function in  $\mathcal{F}$ and $u_{f_0}$ as an associated solution of  \eqref{mpde},  then
    one can find a density function $f_1$ in $\mathcal{F}$ regarding Lemma \ref{bathmin} such that $\int_{\Om}f_0 u_{f_0} dx \geq \int_{\Om}f_1 u_{f_0} dx$ where the equality occurs only when $f_0=f_1$.
    Then,
    $$\Psi(f_0)= \int_{\Om}2f_0 u_{f_0}- |\nabla u_{f_0}|^2 dx\geq \int_{\Om}2f_1 u_{f_0}- |\nabla u_{f_0}|^2 dx\leq \int_{\Om}f_1 u_{f_1}- |\nabla u_{f_1}|^2 dx=\Psi(f_1).$$
     Hence, we cannot produce a decreasing sequence of energies since the next generated energy may be greater than the previous one.  In  \cite{kao2},   an acceptance rejection method has been added to guarantee a monotone decreasing sequence in such situation. This means that if this new vorticity function decreases the energy, it will be accepted. Otherwise, the partial swapping method will be used. Indeed, in the partial swapping method we use a function $f_1$ in $\mathcal{F}$ where $\delta f =f_1-f_0$ is small enough and $\int_{\Om}f_0 u_{f_0}\geq \int_{\Om}f_1 u_{f_0} $. Particularly, the new function $f_1$  has been determined by trail and error. It is a time consuming task to find the new function $f_1$ such that we should apply an acceptance-rejection method including trail and error.
     In this paper we have improved the procedure for those situations where the function $f_1$ can be selected without the acceptance-rejection method. Indeed, we introduce a criterion to derive the function $f_1$ in a better way.

      The following theorem provides the main tool for energy minimization. Hereafter, we set $\theta= \frac{d}{N{\omega_N}^ {\frac{1}{N}}}$.
      \begin{thm}\label{minseq}
      	Let $f_0=\alpha \chi_{D_0}+\beta \chi_{D_0^c}$ and $f_1=\alpha \chi_{D_1}+\beta \chi_{D_1^c}$ be functions in
      	$\mathcal{F}$. Assume $\int_{\Om} f_0 u_{f_0} dx> \int_{\Om} f_1 u_{f_0} dx$ and $\delta f=f_1-f_0$ be small
      	enough. Then $\Psi(f_0)>\Psi(f_1)$. In particular, if sets $B_1=D_0 \setminus D_1$ and $B_2=D_1 \setminus D_0$ satisfies 
      	\begin{equation}\label{mincon}
      	\int_{\Om} (\chi_{B_2}-\chi_{B_1}) u_{f_0}dx+\theta (\al-\bt)|B_1|^{1+\frac{1}{N}}<0,
            	\end{equation}
            	then $\Psi(f_0)>\Psi(f_1)$.
            	\begin{proof}
            It is easy to see that 
            $$\int_{\Om} (f_0+\delta f) u_{f_0+\delta f}dx= \int_{\Om} f_0  u_{f_0}dx +2 \int_{\Om} \delta fu_{f_0}dx+ \int_{\Om} \delta f u_{\delta f}dx,  $$	
            or
            	\begin{equation}\label{psidiff}	
            	\Psi(f_1)- \Psi(f_0)= 2  \int_{\Om} \delta fu_{f_0}dx+\int_{\Om} \delta f u_{\delta f}dx.
            	            	\end{equation}
On the right hand side of the last equality, we have two integrals where we know that the first one from the left are negative and the second one is positive. We can observe that as $\|\delta f\|_{\lt} \rightarrow 0$, the second integral converges to zero with a higher rate of convergence in comparison with the first integral. Hence, if      $\|\delta f\|_{\lt}$ is small enough, we infer that the right hand side of \eqref{psidiff} is negative and $\Psi(f_0)>\Psi(f_1)$ when  $\|\delta f\|_{\lt} \rightarrow 0$.    

 It is easy to verify that $\delta f= (\al-\bt)\chi_{B_2}+(\bt-\al)\chi_{B_1}$. Note that $|B_1|=|B_2|$ then by \eqref{psidiff} we have  	    
 \begin{align*}
 	\Psi(f_1)- \Psi(f_0)=2 (\al-\bt)\int_{\Om} (\chi_{B_2}-\chi_{B_1}) u_{f_0}dx+ (\al-\bt)^2\int_{\Om} (\chi_{B_2}-\chi_{B_1})
 	(u_{\chi_{B_2}}-u_{\chi_{B_1}})dx=\quad
 	\\
 	 2 (\al-\bt)\int_{\Om} (\chi_{B_2}-\chi_{B_1}) u_{f_0}dx+  (\al-\bt)^2\left ( \int_{\Om}\chi_{B_2}u_{\chi_{B_2}}dx+  \int_{\Om}\chi_{B_1}u_{\chi_{B_1}}dx-2\int_{\Om}\chi_{B_1}u_{\chi_{B_2}}dx\right)<
 	\\ 
 	  2 (\al-\bt)\int_{\Om} (\chi_{B_2}-\chi_{B_1}) u_{f_0}dx+  (\al-\bt)^2\left ( \int_{\Om}\chi_{B_2}u_{\chi_{B_2}}dx+  \int_{\Om}\chi_{B_1}u_{\chi_{B_1}}dx\right)< \qquad  \qquad  \qquad \quad
 	  \\ 2 (\al-\bt)\int_{\Om} (\chi_{B_2}-\chi_{B_1}) u_{f_0}dx+  2(\al-\bt)^2 \theta |B_1|^{1+\frac{1}{N}},  \qquad  \qquad  \qquad \quad 
 	   \qquad     \quad  \qquad  \qquad  \qquad \quad
 \end{align*}    
 in view of Lemma \ref{uf} -(iii). This yields the proof of the second assertion of the theorem. 
            	\end{proof}
      \end{thm}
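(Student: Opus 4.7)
The plan is to treat $\Psi(f_1)-\Psi(f_0)$ as a quadratic functional in the perturbation $\delta f = f_1-f_0$. First I would exploit the linearity of the Laplacian to write $u_{f_1} = u_{f_0}+u_{\delta f}$, and then expand $\Psi(f_1) = \int_\Om (f_0+\delta f)(u_{f_0}+u_{\delta f})\,dx$. The two cross integrals $\int_\Om \delta f\, u_{f_0}\,dx$ and $\int_\Om f_0\, u_{\delta f}\,dx$ coincide (both equal $\int_\Om \nabla u_{f_0}\cdot\nabla u_{\delta f}\,dx$ after integration by parts), which yields the clean decomposition $\Psi(f_1)-\Psi(f_0) = 2\int_\Om \delta f\, u_{f_0}\,dx + \int_\Om \delta f\, u_{\delta f}\,dx$.

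For the first assertion, I would observe that the first summand is strictly negative by the hypothesis $\int_\Om f_0 u_{f_0}\,dx > \int_\Om f_1 u_{f_0}\,dx$, while the second is quadratic in $\delta f$: using Lemma \ref{uf}-(i) we have $\|u_{\delta f}\|_{L^2(\Om)} \leq C\|\delta f\|_{L^2(\Om)}$, so that the second summand is $O(\|\delta f\|_{L^2(\Om)}^2)$. Since the first summand is linear in $\delta f$ with a nonvanishing coefficient, for $\|\delta f\|_{L^2(\Om)}$ small enough the sum is dominated by the negative linear term, giving $\Psi(f_1) < \Psi(f_0)$.

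For the explicit criterion, I would use that $|D_0|=|D_1|=A$ forces $|B_1|=|B_2|$, together with the identity $\delta f = (\alpha-\beta)(\chi_{B_2}-\chi_{B_1})$. Substituting into the expansion gives $\Psi(f_1)-\Psi(f_0) = 2(\alpha-\beta)\int_\Om(\chi_{B_2}-\chi_{B_1})u_{f_0}\,dx + (\alpha-\beta)^2\bigl[\int_\Om \chi_{B_2} u_{\chi_{B_2}}\,dx + \int_\Om \chi_{B_1} u_{\chi_{B_1}}\,dx - 2\int_\Om \chi_{B_1} u_{\chi_{B_2}}\,dx\bigr]$. Since $u_{\chi_{B_2}}$ and $\chi_{B_1}$ are nonnegative, the cross term is nonpositive in the expression above, so I would drop it to obtain an upper bound. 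Then Lemma \ref{uf}-(iii) with $p=N$ applied to each of $u_{\chi_{B_1}}$ and $u_{\chi_{B_2}}$ gives $\int_\Om \chi_{B_i} u_{\chi_{B_i}}\,dx \leq |B_i|\cdot \theta\,\|\chi_{B_i}\|_{L^N(\Om)} = \theta\,|B_i|^{1+1/N}$. Recalling $|B_1|=|B_2|$, this collapses to the criterion \eqref{mincon} after factoring out $2(\alpha-\beta)>0$.

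The main delicacy is that the quadratic term must be bounded above by \emph{discarding} a nonnegative cross term, which is slightly wasteful but necessary to produce a criterion depending only on $u_{f_0}$ and $|B_1|$ rather than on the unknown functions $u_{\chi_{B_i}}$. This is what will make \eqref{mincon} algorithmically useful in the subsequent minimization routine: no auxiliary Poisson problems need be solved in order to test whether a candidate swap decreases the energy.
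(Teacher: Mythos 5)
Your proposal is correct and follows essentially the same route as the paper: the same quadratic expansion $\Psi(f_1)-\Psi(f_0)=2\int_\Om \delta f\,u_{f_0}\,dx+\int_\Om \delta f\,u_{\delta f}\,dx$, the same discarding of the nonpositive cross term $-2\int_\Om \chi_{B_1}u_{\chi_{B_2}}\,dx$, and the same application of Lemma \ref{uf}-(iii) to bound $\int_\Om \chi_{B_i}u_{\chi_{B_i}}\,dx$ by $\theta|B_i|^{1+1/N}$. The only difference is cosmetic: you make the $O(\|\delta f\|_{L^2(\Om)}^2)$ bound on the quadratic term slightly more explicit than the paper's rate-of-convergence remark.
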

\begin{rem}\label{decrec}
	If we select $B_1$ and $B_2$ such that $\int_{\Om} \chi_{B_2} u_{f_0}dx < \int_{\Om} \chi_{B_1} u_{f_0}dx $,
	then
	$$\int_{\Om} \delta f  u_{f_0}dx=(\al-\bt) \int_{\Om} (\chi_{B_2}-\chi_{B_1}) u_{f_0}dx<0.$$
	Moreover, $\|\delta f\|_{\lt}=\sqrt{2(\al-\bt)}|B_1|$ will be small if one adjust $|B_1|=|B_2|$ small enough. Then, we have
	$\Psi(f_0)>\Psi(f_1)$ in light of Theorem \ref{minseq}.
	The other way of selecting $B_1$ and $B_2 $ is to find them such that they satisfy \eqref{mincon}.
	
\end{rem}
	
	Utilizing Theorem \ref{minseq} and Remark \ref{decrec}, we can derive a decreasing sequence of energies $\Psi(f_n)\geq \Psi(f_{n+1})$. An interesting result is that the generated sequence converges to the global minimizer of \eqref{minp} with
	any initializer $f_0$ in $\mathcal{F}$.
	\begin{thm}\label{minconev}
	 Let $\{f_n\}_1^\infty$ be a decreasing sequence  derived by Theorem \ref{minseq}. Then, this sequence  converges to the global
	 minimizer of \eqref{minp}.
	\end{thm}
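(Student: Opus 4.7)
The plan is to combine the monotone decrease of energies with weak compactness on the rearrangement class, then exploit the hidden strict convexity of $\Psi$ as a quadratic form to pin the limit down as the unique global minimizer.

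First I would extract a candidate limit. The sequence $\{\Psi(f_n)\}_1^\infty$ is nonincreasing by construction and bounded below by $0$, so it converges to some $\xi\geq 0$. Since every $f_n$ lies in $\mathcal{F}$, the family is uniformly bounded in $\lt$, and Lemma \ref{uf}-(i) bounds $\{u_{f_n}\}$ in $W^{2,2}(\Om)$. Passing to a subsequence (relabelled), $f_n\rightharpoonup \hat f\in\bar{\mathcal{F}}$ weakly in $\lt$ and $u_{f_n}\to u_{\hat f}$ strongly in $\h$ via the compact embedding; the weak-strong pairing then yields $\Psi(f_n)=\int_\Om f_n u_{f_n}\,dx\to\int_\Om\hat f\,u_{\hat f}\,dx=\Psi(\hat f)=\xi$. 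Because $\bt\leq\hat f\leq\al$, Lemma \ref{uf}-(iv) guarantees that the level sets of $u_{\hat f}$ are null, so Lemma \ref{bathmin} applies to $u_{\hat f}$ and produces a unique $\tilde f=\al\chi_{\tilde D}+\bt\chi_{\tilde D^c}\in\mathcal{F}$.

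Next I would establish the fixed-point identity $\hat f=\tilde f$. If this failed, Lemma \ref{bathmin} would give the strict inequality $\int_\Om\tilde f\,u_{\hat f}\,dx<\int_\Om\hat f\,u_{\hat f}\,dx$. Strong $\h$-convergence $u_{f_n}\to u_{\hat f}$ combined with the null level sets of $u_{\hat f}$ would then let me select, for all large $n$, sets $B_1\subset D_n$ and $B_2\subset D_n^c$ with $|B_1|=|B_2|=\eta$ such that $\int_\Om(\chi_{B_2}-\chi_{B_1})\,u_{f_n}\,dx\leq -c\eta$ for a constant $c>0$ independent of $n$. Shrinking $\eta$ so the linear term in \eqref{mincon} dominates the quadratic correction $\theta(\al-\bt)|B_1|^{1+1/N}$, Theorem \ref{minseq} furnishes an admissible $f_n'\in\mathcal{F}$ with $\Psi(f_n)-\Psi(f_n')$ bounded below by a positive number uniform in $n$, contradicting $\Psi(f_n)\to\xi$. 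Hence $\hat f=\tilde f\in\mathcal{F}$, so $\hat f$ is the unique minimizer, produced by Lemma \ref{bathmin} applied to its own PDE solution.

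Finally I would upgrade the fixed-point identity to global optimality using convexity. Writing $\Psi(f)=\langle f,(-\Delta)^{-1}f\rangle_{\lt}$ shows $\Psi$ is a strictly convex quadratic with Fr\'echet derivative $\nabla\Psi(f)=2u_f$, so the subgradient inequality gives, for every $f\in\mathcal{F}$,
\be\label{eq:conv}
\Psi(f)-\Psi(\hat f)\geq 2\int_\Om u_{\hat f}(f-\hat f)\,dx\geq 0,
\ee
where the second inequality is precisely the content of Lemma \ref{bathmin} at $\hat f$. Thus $\hat f$ is a global minimizer of \eqref{minp}; by the known uniqueness every subsequential limit agrees with $\hat f$, and the full sequence converges. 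The real difficulty is the quantitative step inside the fixed-point argument: one has to show that if $\hat f\neq\tilde f$ then swaps fulfilling \eqref{mincon} are available at every sufficiently large $n$ with an energy gain bounded below uniformly in $n$. This rests on stability of sublevel sets of $u_{f_n}$ with respect to those of $u_{\hat f}$, which is afforded by the strong $\h$-convergence and the null limiting level sets, but the uniform lower bound on the decrement is where all the bookkeeping concentrates.
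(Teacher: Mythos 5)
Your proposal is correct in substance but organizes the argument differently from the paper in its decisive step. The paper's proof has two stages: first it asserts (deferring to the argument of Theorem \ref{maxseqcon}) that the sequence converges to a \emph{local} minimizer, and then it shows that any two local minimizers $f_1,f_2$ coincide by combining their first-order optimality conditions (Burton's Theorem 3.3) with the identity $\int_\Om (f_1-f_2)(u_{f_1}-u_{f_2})\,dx=\int_\Om|\nabla u_{f_1-f_2}|^2\,dx\ge 0$, so the unique local minimizer is the global one. You instead show directly that the limit $\hat f$ is a fixed point of the Lemma \ref{bathmin} map (i.e.\ satisfies the first-order condition $\int_\Om u_{\hat f}(f-\hat f)\,dx\ge 0$ for all $f\in\mathcal F$), and then use the exact quadratic expansion $\Psi(f)=\Psi(\hat f)+2\int_\Om u_{\hat f}(f-\hat f)\,dx+\int_\Om|\nabla u_{f-\hat f}|^2\,dx$ to conclude global minimality in one line. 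Both arguments exploit the same positive-definiteness of $f\mapsto\langle f,(-\Delta)^{-1}f\rangle$, but yours bypasses the notion of local minimizer and the citation of Burton's characterization, which makes the final step more self-contained; the paper's version buys the stronger intermediate statement that the problem has no spurious local minimizers at all. Two caveats. First, your fixed-point step (if $\hat f\ne\tilde f$ then uniform energy decrements are available for large $n$) is the real content of "the sequence converges to a stationary point," and as you concede it requires an averaging/selection argument to extract swap sets $B_1,B_2$ of small fixed measure with a decrement proportional to that measure, uniformly in $n$; moreover it implicitly assumes the algorithm actually realizes (at least) such a decrement at each step rather than taking arbitrarily timid admissible swaps. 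The paper is no more rigorous here — it omits this step entirely by analogy with Theorem \ref{maxseqcon} — so this is a shared soft spot rather than a defect of your proof relative to the paper's. Second, your appeal to "the known uniqueness" to pass from subsequences to the full sequence is unnecessary given your own machinery: the same quadratic expansion applied to two global minimizers forces them to coincide, so you may as well close the loop yourself.
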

	\begin{proof}
	
	We can prove that the decreasing sequence converges to a local minimizer of \eqref{minp}. The proof is similar to that for Theorem
	\ref{maxseqcon} and is omitted. 
	
	 Assume that $f_1$ and $f_2$ are two local minimizer of \eqref{minp} then
	 \begin{equation}\label{f12con}
	 \int_\Om f_1u_{f_1}dx\leq \int_\Om f_2u_{f_1} \quad and\quad  \int_\Om f_2u_{f_2}dx \leq \int_\Om f_1u_{f_2}dx,
	 \end{equation}
	 in view of Theorem 3.3 of \cite{bu89}. Now, we have
	 \begin{align*}
	 0&\leq&\int_\Om (f_1-f_2)(u_{f_1}-u_{f_2})dx=\int_\Om f_1u_{f_1}-f_1u_{f_2}-f_2u_{f_1}+f_2u_{f_2}dx\leq \qquad\qquad \qquad\qquad
	 \\
	 &\leq&\int_\Om f_2u_{f_1}-f_1u_{f_2}-f_2u_{f_1}+f_1u_{f_2}dx=0,\qquad \qquad\qquad\qquad\qquad \qquad\qquad \qquad\qquad
	 \end{align*}
	 invoking \eqref{f12con}. Hence, we observe that
	 $$\int_\Om (f_1-f_2)(u_{f_1}-u_{f_2})dx=\int_\Om |\nabla u_{f_1-f_2}|^2dx=0,$$
	 which yields that $u_{f_1}=u_{f_2}$ in $\Om$ and then $f_1=f_2$ almost everywhere in $\Om$. Therefor, this says that \eqref{minp} has
	 only one local minimizer which is in fact the unique global minimizer. Consequently, we can deduce that 
	 $\{f_n\}_1^\infty$ converges to the global minimizer.
	\end{proof}
     
       Now, we provide the details of the minimization algorithm introduced above. At iteration step $n$, there is a guess for the configuration of the optimal vorticity function where it is denoted by $f_n=\al \chi_{D_n}+\bt\chi_{D_n^c}$. We use the finite element method with piecewise linear basis functions to discretize  equation \eqref{mpde} with $f_n$ as its right hand side. 
       
       Let $u_{f_n}$ be a
       solution of \eqref{mpde}  associated with
       $f_n$. Based upon level sets of $u_{f_n}$, we should extract new set $D_{n+1}$, $|D_{n+1}|=A$ and then $f_{n+1}=\al \chi_{D_{n+1}}+\bt\chi_{D_{n+1}^c}$ where $\Psi(f_n) > \Psi(f_{n+1})$. Employing Theorem \ref{minseq}, this set is calculated such that it satisfies condition \eqref{mincon} to ensure $\Psi(f_n) > \Psi(f_{n+1})$.
       
       According to Theorem \ref{minconev}, the generated sequence is convergent to the global minimizer.
       The resulting algorithm is shown in Algorithm 3.The stopping criterion is that $\delta \Psi$ should be less than a
       prescribed tolerance $TOL$.
        \begin{table}[h]
        	\centering 
        	\begin{tabular}{ l} 
        		\hline 
        		\hline
        		\textbf{Algorithm $3$.} Energy minimization \\ [1 ex] 
        		\hline 
        		\hline
        		\textbf{Data:} An initial  vorticity function $f_0=\al \chi_{D_0}+\bt\chi_{D_0^c}$  \\ 
        		\textbf{Result:} A sequence of decreasing energies $\Psi(f_n)$  \\
        		\textbf{$1$.} Set $n = 0$;  \\
        		\textbf{$2$.} Compute $D_{n+1}$ applying Lemma \ref{bathmin} with $u=u_{f_n}$, $\al=1,\:\bt=0$; \\
        		\textbf{$3$.}  Set $f_{n+1}=\al \chi_{D_{n+1}}+\bt \chi_{D_{n+1}^c} $; If $\delta \Psi<TOL$, then stop; \\
        		\textbf{$4$.} Set  $B=D_{n+1}\setminus D_{n}$, $B^\prime=D_{n}\setminus D_{n+1}$, $A^\prime=|B|$ ;\\
        		\textbf{$5$.} Compute\\
        		$\quad$ $t=\inf\{s\in \mathbb{R} : |\{x\in B:\,\:u_{f_n}(x)\leq s\}|\geq A^\prime\}$;\\
        		$\quad$ $t^\prime=\sup\{s\in \mathbb{R} : |\{x\in B^\prime:\,\:u_{f_n}(x)\geq s\}|\geq A^\prime\}$;\\
        		\textbf{$6$.} Set $B_2=\{x\in B:\,\:u_{f_n}(x)\leq t\}$, $B_1=\{x\in B^\prime:\,\:u_{f_n}(x)\geq t^\prime\}$;\\
        		\textbf{$7$.} If $B_1$ and $B_2$ satisfy condition \eqref{mincon} then\\
        		$\qquad$  $D_{n+1}=(D_{n} \setminus B_1)\cup B_2$;\\
        		$\qquad$ $D_n=D_{n+1}$;\\
        		$\qquad$ Go to step 2;\\
        		$\quad$ else\\
        		$\qquad$   Set $A^\prime=A^\prime/2$;\\
        		$\qquad$  Go to step $5$;\\
        		[1ex] 
        		\hline 
        	\end{tabular}
        	\label{table1} 
        \end{table}
      
       In the second step of Algorithm $3$, we should invoke Algorithm $1$ in order to compute set $D_{n+1}$. Moreover, we need  Algorithm $1$  to
       calculate parameters $t$ and $t^\prime$ in step $5$ of Algorithm $3$.
       

\begin{figure}[h!]
	\centering
	\subfigure[$\Psi(\hat{D})\approx 13.26$]{\label{max1-cir}\includegraphics[width=0.29\textwidth]{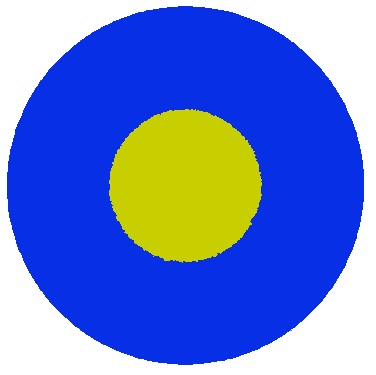}}
	\qquad
	\subfigure[$\Psi(\hat{D})\approx 18.80$]{\label{max1-rec}\includegraphics[width=0.295\textwidth]{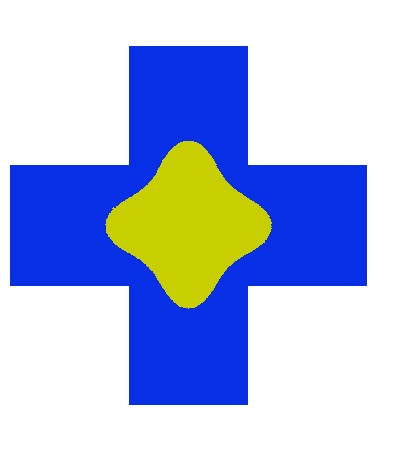}}
	\qquad
	\subfigure[$\Psi(\hat{D})\approx 26.29$]{\label{max1-hea}\includegraphics[width=0.24\textwidth]{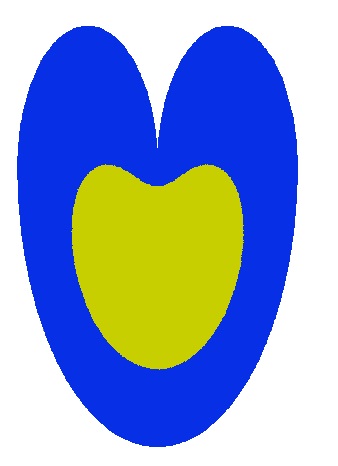}}
	\caption{The maximizer sets in yellow}
	\label{max1}
\end{figure}
\begin{figure}[h!]
	\centering
	\subfigure[$\Psi(\hat{D})\approx 7.14$]{\label{min-cir}\includegraphics[width=0.29\textwidth]{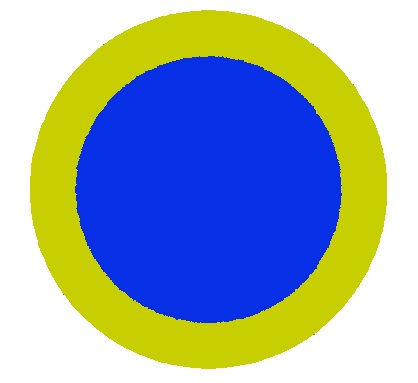}}
	\qquad
	\subfigure[$\Psi(\hat{D})\approx 8.78$]{\label{min-rec}\includegraphics[width=0.29\textwidth]{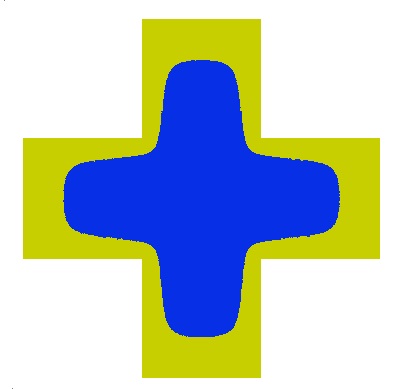}}
	\qquad
	\subfigure[$\Psi(\hat{D})\approx 12.29$]{\label{min-hea}\includegraphics[width=0.24\textwidth]{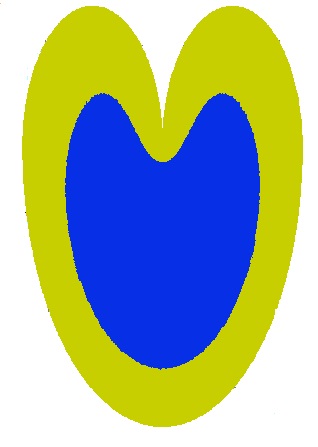}}
	\caption{The minimizer sets in yellow}
	\label{min}
\end{figure}

\begin{figure}[h!]
	\centering
	\subfigure[$\Psi(\hat{D})\approx 2.19 $]{\label{dumble-max-u}\includegraphics[width=0.18\textwidth]{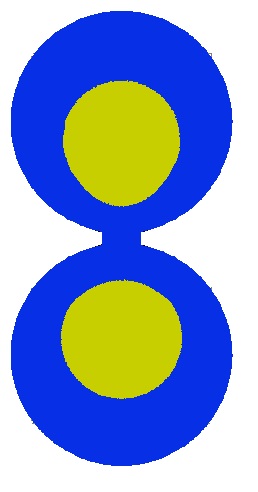}}
	\qquad
	\subfigure[$\Psi(\hat{D})\approx 1.11$]{\label{dumble-min-u}\includegraphics[width=0.2\textwidth]{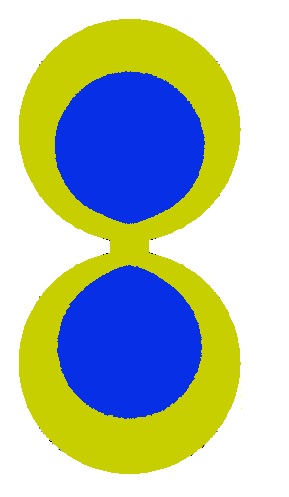}}
	\qquad
	\subfigure[$\Psi(\hat{D})\approx 4.72$]{\label{dumble-max-a-loc}\includegraphics[width=0.19\textwidth]{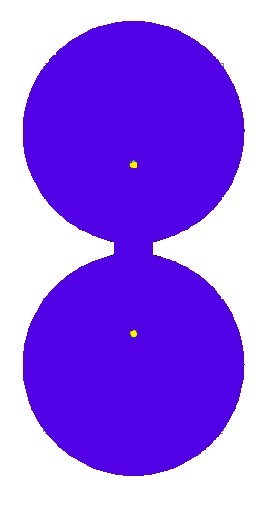}}
	\qquad
		\subfigure[$\Psi(\hat{D})\approx 6.30$]{\label{dumble-max-a-gol}\includegraphics[width=0.185\textwidth]{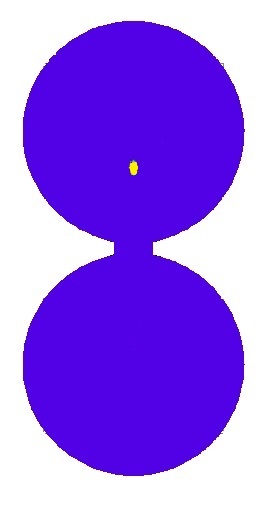}}
	\caption{The optimizer sets in yellow}
	\label{dumble}
\end{figure}

\section{Implementing The Algorithms}\label{implement}
  
 In this section some examples are chosen to illustrate the numerical solutions of the optimization problems \eqref{maxp} and \eqref{minp} with an eye on the physical interpretation of them. We present some results in dimension $N=2$ based on Algorithms  $1, \:2$ and $3$.
 
  In the following examples, we have set  $TOL=5\times 10^{-3}$ for  Algorithms  $1, \:2$ and $3$. These algorithms  typically converge in less than ten iterations for all examples.
  \\
  \\
  \textbf{Example 1.}  Setting $\alpha=2$ and $\beta=1$, we want to find  the solution of the maximization problem \eqref{maxp} using Algorithm 2. We illustrate
   the optimal set when $\Omega$ has different shapes. Remember that our aim is to determine a set $\hat{D}\subset \Omega$ so to maximize the
   energy functional $\Psi(D)$. Such optimal sets are plotted in Figure \ref{max1} for various geometries $\Omega$. The maximizer sets $\hat{D}$ are depicted in yellow. In these numerical experiments we set $|A|=\pi ,\: |\Omega|=4\pi$ in Figure \ref{max1-cir},  $|A|=6 ,\: |\Omega|=20$ in Figure \ref{max1-rec} and $|A|= 7.80,\: |\Omega|=18.85$ in Figure \ref{max1-hea}. 
 \\
 \\
  \textbf{Example 2.}  In this example we want to find the solution of the minimization problem \eqref{minp} invoking Algorithm 3. Again, we have set     $\alpha=2$ and $\beta=1$. We show
  the optimal set when $\Omega$ has different geometries.  The yellow sets are the minimizer sets $\hat{D}$.  Parameters $A$, $|\Omega|$ are same as the previous example. 
  
   Let us take a look at physical interpretation of the solutions. If $f$ is an optimal solution derived in the above examples, then there is a monotone function $\phi:\mathbb{R}\rightarrow \mathbb{R}$ where
   $f=\phi(u_f)$ \cite{bu87,bu89}. Then, $u_f$ represents the stream function for the steady flow of an ideal fluid in two dimensions confined by a solid wall in the shape $\Omega$. The velocity field is given by $(u_{x_2},-u_{x_1})$ and the vorticity which is given by
   the curl of the velocity has magnitude $f=-\Delta u_f$. The optimal solutions show different configurations of a region of vorticity
   in an otherwise irrotational flow. They correspond to stationary and stable flows.
     \\
  \\
  \textbf{Example 3.} In this example we show that there is a  drawback for the maximization algorithm. Such algorithms may
  stick to a local optimizers. To overcome this problem a typical method is to run the maximization algorithm with different initializers. Then one   can compare the derived maximizers and choose the best one. 
  In this example the domain is a dumbbells with $|\Omega|=6.32$. In Figures \ref{dumble-max-u} and \ref{dumble-min-u} we have tested Algorithms 2 and 3  with parameters $A=2.54,\: \alpha=2,\: \beta=1$.  
   Figure \ref{dumble-min-u} illustrates the global minimizer set in yellow. It is noteworthy that the minimization Algorithm 3 converges
   to the global minimizer from any initializer.
  Starting from different initial sets, our numerical tests converge to the maximizer set, the yellow set, in Figure \ref{dumble-max-u}. 
  Although this has not been established theoretically, the numerical tests converge to the  global maximizer set of the respective problem. 
  Figure \ref{dumble-max-a-loc} and  Figure \ref{dumble-max-a-gol} show the maximizer sets employing parameter $A=0.03,\: \alpha=100,\: \beta=1$. Indeed, $\alpha$ and $\beta$ are not close to each other, the high-contrast regime. If one initiates Algorithm 2 from a set
  where intersects both lobes, typically the optimal set determined by the algorithm will be the yellow set in Figure \ref{dumble-max-a-loc}.
  Indeed, this set is a local maximizer where the algorithm  sticks to it.
  On the other hand, if we run the algorithm from an initializer set in one of the lobes, the algorithm converges to a maximizer set in  the lobe
  which we have started from it, see Figure \ref{dumble-max-a-gol}. This is the global maximizer of the problem and it reveals that
  this problem has two global maximizers.
  
  Example 3 reveals that a  dumbbells domain has three local maximizers. This is in accordance with the results 
  of \cite{master, bu89}. Indeed, our algorithm is capable of deriving local maximizers of complicated domains such as
  domain $\Omega$ that approximates the union of $n$ balls.
  

\section{Conclusions}

 In this paper, we have studied two optimization problems associated with   Poisson boundary value problem \eqref{mpde}.
 An optimal solution $f$ that maximize or minimize optimization problems \eqref{maxp} or \eqref{minp} 
in a set of rearrangements  defines a stationary and stable flow of an ideal fluid in two dimensions, confined by a solid wall in the
shape of $\Omega$. 

 Although  there is a plenitude of papers studied rearrangement optimization problems \eqref{maxp} and \eqref{minp} including investigation of 
the  existence, uniqueness and some qualitative properties of the solutions, 
 the question of the exact formula of the optimizers or  optimal shape design have  been considered just in case that $\Omega$ is a ball \cite{bu91}.

 The main contribution of this paper is investigating and determining the optimal shape design for a general domain $\Omega$. At first, we have addressed this question analytically when the problem is in  low contrast regime. Although it has been proved that solutions of \eqref{maxp}
 are not unique in general, we have established that the  solutions of both problems \eqref{maxp} and \eqref{minp} are unique when $\alpha$ and $\beta$ are close to each other. Indeed, the analytical solutions of  \eqref{maxp} and \eqref{minp} is determined by a super-level set or
 sub-level set of the solution of \eqref{mpde} with right-hand side $\beta$.
  
  Secondly,  when $\al$ and $\bt$ are not close to each other, the high contrast regime, there must be numerical approaches to determine the optimal shape design. Two optimization algorithms have been developed in order to find the optimal energies for problems \eqref{maxp} and \eqref{minp}. For the minimization problem, we have proved that our algorithm converges to the global minimizer of \eqref{minp} regardless of the initializer.
 In our algorithm for the minimization problem \eqref{minp}, we have replaced the partial swapping method used in \cite{kao2} with a step where one should only verify that some of data fulfill a criterion. Partial swapping method increases the cost of computations since one must accept or reject some data by checking whether the objective function is improved or not.
 
 For the maximization problem \eqref{maxp}, an algorithm have been developed where we have proved that it converges to a local maximizer.
 Running the algorithm with different initializers, one can obtain the global maximizer.
 Particularly, our algorithm is capable of deriving all local maximizers including the global one  for complicated domains such as
 domain $\Omega$ that approximates the union of $n$ balls. 
 
 Setting $\hat{f}$ as the global minimizer of the problem \eqref{minp} derived by Algorithm 3, numerical tests in the previous section reveal that  the local maximizers are in the farthest away from $\hat{f}$ relative to $\mathcal{F}$ . Since $\|f\|_{L^2(\Om)}=\|f_0\|_{L^2(\Om)}$ for all  $f\in \mathcal{F} $ \cite{bu89}, then we see
 \begin{equation*}
\|f-\hat{f}\|^2_{L^2(\Om)}=2\|f_0\|^2_{L^2(\Om)}-2\int_{\Om}f \hat{f}dx,
 \end{equation*}
 and so solutions of the minimization problem
 \begin{equation}\label{mineq}
\underset{ f\in \mathcal{F}}{\min} \int_{\Om}f \hat{f}dx,
 \end{equation}
are in the farthest away from $\hat{f}$ relative to $\mathcal{F}$.
Problem \eqref{mineq} has a solution  \cite{bu87,bu89} and the solution can be calculated by Algorithm 1.
All maximizers in  examples 2 and 3 are solutions of \eqref{mineq}. We conjecture that  all local maximizers are solutions of
\eqref{mineq}. Then we can recast optimization problem \eqref{maxp}  in
\begin{equation*}
\underset{f\in \mathcal{F^\prime}}{\max } \Psi(f),
\end{equation*}
where $\mathcal{F^\prime} \subset \mathcal{F} $ is the  solutions of \eqref{mineq}. It seems that solving this equation numerically is simpler since we are searching for
the maximizer in a smaller set of functions.
Our maximization algorithm will converge faster if one starts from a member of $\mathcal{F^\prime}$.
 It would be interesting if one studies this new maximization problem analytically or numerically since
  local maximizers are also corresponding to steady flows of the fluid and deriving them are physically important. 



\section{References}

\end{document}